\date{}
\newtheorem{theorem}{Theorem}[section]
\newtheorem{lemma}[theorem]{Lemma}
\newtheorem{corollary}[theorem]{Corollary}
\newtheorem{proposition}[theorem]{Proposition}
\newtheorem{remark}[theorem]{Remark}
\newtheorem{definition}[theorem]{Definition}
\numberwithin{equation}{section}
\begin{document}

\centerline{\bf
Sobolev regularity for the  Monge--Amp{\`e}re
equation}

\centerline{\bf in the Wiener space}

\vskip .1in

\centerline{\bf Vladimir I.~Bogachev, Alexander V.~Kolesnikov}

\vskip .1in

\centerline{\bf Abstract}

\vskip .1cm

{\small
Given  the standard Gaussian measure $\gamma$
on the countable product of lines $\mathbb{R}^{\infty}$ and
a probability measure $g \cdot \gamma$
absolutely continuous with respect to $\gamma$, we consider
  the optimal transportation $T(x) = x + \nabla \varphi(x)$
of $g \cdot \gamma$  to $\gamma$. Assume that the function
$|\nabla g|^2/g$ is $\gamma$-integrable.
We prove that the function $\varphi$ is  regular
in a certain Sobolev-type sense and satisfies
the classical change of variables formula
$g = {\det}_2(\mbox{I} + D^2 \varphi) \exp \bigl( \mathcal{L} \varphi - \frac{1}{2} |\nabla \varphi|^2 \bigr)$.
We also establish sufficient conditions for the existence of
third order derivatives of  $\varphi$.
}

\vskip .1cm

\noindent
Keywords:
Monge--Kantorovich problem,  Monge--Amp{\`e}re equation,
Gaussian measure, Cameron--Martin space, Gaussian Sobolev space,
change of variables formula.

MSC: 28C20, 46G12, 58E99, 60H07

\section{Introduction}

 Numerous applications of the optimal transportation theory in finite-dimensional spaces
have been found during the last  decade. They include differential equations, probability theory, and geometry (see \cite{AGS}, \cite{Vill}).
The situation in infinite-dimensional spaces has been much less studied.
However, some  partial results on existence, uniqueness, and
regularity have been obtained in  \cite{FU1},  \cite{Kol04}, \cite{BoKo2005}, \cite{BoKo2006}.

In the finite-dimensional case any optimal transportation  mapping $T$ is a solution to
 the variational Monge--Kantorovich problem.
Assume we are given two probability measures $\mu$ and $\nu$ on $\mathbb{R}^d$
with finite second moments. The so-called
optimal transportation mapping  $T$ minimizes the functional
$$
\int | T(x) - x|^2 \ \mu(dx),
$$
where $|\, \cdot\, |$ is the standard Euclidean norm, among  all mappings sending $\mu$ to $\nu$: $\nu = \mu \circ T^{-1}$.
There exists a  unique ($\mu$-a.e.) mapping of this type. It turns out  (see  \cite{Vill})
that there exists a convex function
$\Phi$ such that $T$ has the form $T(x) = \nabla \Phi(x)$ for $\mu$ almost all $x$.

In the infinite-dimensional case the natural  norm to
be minimized  does not coincide
with the ambient norm. For instance,
it is well-known that the ``natural'' norm  on the
Wiener space  is the Cameron--Martin norm $|\, \cdot\, |_{H}$,
which is infinite almost everywhere.
Thus the natural infinite-dimensional Monge--Kantorovich problem on the Wiener space  deals with the functional
$$
\int | T(x) - x|^2_{H} \ \mu(dx)
$$
and two probability measures $\mu$ and $\nu$ which are absolutely continuous with respect to $\gamma$.
A sufficiently complete solution to the infinite-dimensional transportation problem on the Wiener space has been obtained  in
\cite{FU1} (see an alternative approach in \cite{Kol04}).
In particular, if
$$
{\mbox{Ent}}_{\gamma} g = \int g \log g \ d  \gamma < \infty
$$
 and $\nu = \gamma$, $\mu = g \cdot \gamma$, then
there exists an optimal transportation
$T(x) = x + \nabla \varphi(x)$ of $g \cdot \gamma$ to $\gamma$,
i.e., $\gamma = (g \cdot \gamma) \circ{T^{-1}}$,
where $\varphi$ is a function  possessing (in a certain sense)
the gradient $\nabla$ along the Cameron--Martin space;
if $\mu$ is equivalent to $\gamma$, then $\varphi$ is a $1$-convex potential
(see \cite{FU00}, where the optimal transportation
of this form is constructed from $\gamma$ to $\mu$ and our $T$ is its inverse).
Existence of an optimal transportation for any couple of probability measures
absolutely continuous with respect to $\gamma$ has been recently established in \cite{Cav}.

In addition, the following inequality
(called Talagrand's inequality) holds:
$$
\int g \log g \ d\gamma
\ge \frac{1}{2} \int |\nabla \varphi|^2 \ g  d\gamma.
$$
Moreover, there exists a mapping $S$ such that $T \circ S(x) = x$ for $g \cdot \gamma$-a.e. $x$
and
$S \circ T(x) =x$ for $\gamma$-a.e. $x$.
The mapping $S$ is an optimal transportation mapping  too
(it takes $\gamma$ to $g\cdot\gamma$)
and has the form $S(x) = x + \nabla \psi(x)$.

In this paper we study the change of variables formula. One can formally compute that the
following expression must hold:
\begin{equation}
\label{ek1.1}
g = {\det}_2({\mbox{I}} + D^2 \varphi) \exp \bigl( \mathcal{L} \varphi - \frac{1}{2} |\nabla \varphi|^2 \bigr),
\end{equation}
where  $D^2$ is the second derivative,
$$
\mathcal{L} \varphi(x) = \Delta \varphi(x) - \langle x, \nabla \varphi(x)\rangle =
\mbox{div}_{\gamma} (\nabla \varphi)(x)
$$
 is the  Ornstein--Uhlenbeck operator, and $\det_2$ is the
Fredholm--Carleman
determinant defined by  $\det_2(\mbox{I} + K) = \prod_{i=1}^{\infty} (1+k_i)e^{-k_i}$, where $K$
is a symmetric Hilbert--Schmidt operator with eigenvalues~$k_i$.
Note that $\det_2(\mbox{I} + K)\le 1$ if $I+K\ge 0$.
Relation  (\ref{ek1.1}) can be considered as an infinite-dimensional Monge--Amp{\`e}re  equation with an
unknown function  $\varphi$.

For the inverse mapping   $T^{-1}(x) = x + \nabla \psi(x)$
the change of variables formula takes the form
\begin{equation}
\label{ek1.2}
g({\rm I}+ \nabla \psi) {\det}_2(\mbox{I} + D^2 \psi) \exp \bigl( \mathcal{L} \psi - \frac{1}{2} |\nabla \psi|^2 \bigr)=1.
\end{equation}

It is a nontrivial problem  which  mappings  satisfy (\ref{ek1.1})
in the general case  (see \cite{Bo98}, \cite{Bdifm}, \cite{UZ}).
Identity  (\ref{ek1.2})  was obtained in \cite{FU1} under the assumption that  $g \cdot \gamma$
is uniformly log-concave,
i.e., $-D^2 \log g + \mbox{I} \ge \varepsilon \mbox{I}$, where $\varepsilon>0$.
It was shown in  \cite{BoKo2005}, \cite{BoKo2006} that under the assumptions
 $\mbox{Ent}_{\gamma} g < \infty$ and $g > c >0$
one has
$$
g = {\det}_2(\mbox{I} + D^2_{a} \varphi) \exp \bigl( \mathcal{L}_{a} \varphi - \frac{1}{2} |\nabla \varphi|^2 \bigr),
$$
where $D^2_{a} \varphi$ and $\mathcal{L}_{a} \varphi$  are the
absolutely continuous parts of  $D^2 \varphi$ and $\mathcal{L} \varphi$ respectively.
However, in \cite{BoKo2005}, \cite{BoKo2006} we were unable to prove the precise formula (\ref{ek1.1})
and identify $\mathcal{L}_a\varphi$ with $\mathcal{L}\varphi$.
The following theorem is the main result of this paper.

The Hilbert--Schmidt norm of an operator $A$ is denoted by $\|A\|_{\mathcal{HS}}$;
by definition $\| A \|_{\mathcal{HS}}^2  = \mbox{\rm{Tr}} (AA^*)$.

\begin{theorem}
Assume that
$\sqrt{g}\in W^{2,1}(\gamma)$, in particular
\begin{equation}\label{ek1.3}
{\rm{I}}_\gamma g:=\int \frac{|\nabla g|^2}{g} \ d\gamma < \infty.
\end{equation}
 Then $D^2\varphi(x)$ exists as a Hilbert--Schmidt operator $g\cdot\gamma$-a.e.,
$$
\int \|D^2 \varphi\|^2_{\mathcal{HS}} \ g  d\gamma < \infty,
$$
 $\mathcal{L} \varphi \in L^1(g \cdot \gamma)$ and  $g \cdot \gamma$-a.e. there holds
the change of variables formula
$$
g = {\det}_2 ({\rm{I}} + D^2 \varphi)
\exp \bigl( \mathcal{L} \varphi - \frac{1}{2} |\nabla \varphi|^2\bigr).
$$
\end{theorem}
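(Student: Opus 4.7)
My plan is a finite-dimensional approximation combined with a uniform second-order Sobolev estimate driven by the Fisher information. I would set $g_n = \mathbb{E}_\gamma[g\mid x_1,\dots,x_n]$ and, if additional smoothness is needed, further regularize by a short-time Ornstein--Uhlenbeck semigroup; neither operation increases $I_\gamma$ or $\operatorname{Ent}_\gamma$. On each $\mathbb{R}^n$, Caffarelli's interior regularity for the Monge--Amp\`ere equation yields a smooth, strictly $1$-convex potential $\varphi_n$ such that $T_n = \mathrm{id} + \nabla\varphi_n$ pushes $g_n\gamma_n$ to $\gamma_n$ and the pointwise identity
$$
g_n = {\det}_2(\mathrm{I} + D^2\varphi_n)\exp\bigl(\mathcal{L}\varphi_n - \tfrac12|\nabla\varphi_n|^2\bigr)
$$
holds classically. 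Everything downstream reduces to letting $n\to\infty$ with uniform control on $D^2\varphi_n$.

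The heart of the proof is the uniform bound
$$
\int \|D^2\varphi_n\|^2_{\mathcal{HS}} \, g_n\, d\gamma_n \leq C\bigl(I_\gamma(g) + \operatorname{Ent}_\gamma(g) + 1\bigr)
$$
with $C$ independent of $n$. I would derive it by differentiating the logarithmic form of the Monge--Amp\`ere identity in $x_k$, which produces $\nabla\log g_n$ on the left and, on the right, the contraction of the third-order tensor $D^3\varphi_n$ against $(\mathrm{I} + D^2\varphi_n)^{-1}$ together with lower-order pieces. After testing against $\nabla\varphi_n$, integrating with respect to $g_n\,d\gamma_n$, and performing Gaussian integration by parts (Stein's identity together with the transfer of derivatives from $\varphi_n$ onto $g_n$), the cubic contribution should reorganize into a nonnegative Bochner-type quadratic form in $D^2\varphi_n$ that provides the coercive term, while the remaining pieces are estimated by Cauchy--Schwarz in terms of $I_\gamma(g_n) \le I_\gamma(g)$ and $\int|\nabla\varphi_n|^2 g_n\,d\gamma_n \le 2\operatorname{Ent}_\gamma(g)$ (the finite-dimensional Talagrand inequality). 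The $1$-convexity $\mathrm{I} + D^2\varphi_n \geq 0$ is crucial for the sign of the Bochner term, and the hypothesis $\sqrt g\in W^{2,1}(\gamma)$ enters precisely to ensure that $\nabla g_n/g_n$ is well behaved enough for these manipulations to survive the approximation.

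For the passage to the limit, stability of optimal transport and convergence of the marginals give $\nabla\varphi_n \to \nabla\varphi$ in $L^2(g\cdot\gamma)$. The uniform Hilbert--Schmidt bound then produces a weak limit $A$ of $D^2\varphi_n$ in $L^2(g\cdot\gamma;\mathcal{HS})$; pairing with smooth cylindrical test fields and integrating by parts identifies $A$ with the distributional Hessian of $\varphi$ along Cameron--Martin directions, while comparison with the results of \cite{BoKo2005, BoKo2006} on the absolutely continuous part $D^2_a\varphi$ forces the singular part to vanish, so that $A = D^2\varphi$ as operators. Rewriting \eqref{ek1.1} as $\mathcal{L}\varphi_n = \log g_n - \log{\det}_2(\mathrm{I}+D^2\varphi_n) + \tfrac12|\nabla\varphi_n|^2$ and passing to the limit term by term, using continuity of $\det_2$ on $\mathcal{HS}$ and a control of $|\log{\det}_2(\mathrm{I}+A)|$ by $\|A\|^2_{\mathcal{HS}}$ (valid once $\mathrm{I}+A$ is bounded away from singularity, which survives from the finite-dimensional $g_n>0$), yields $\mathcal{L}\varphi \in L^1(g\cdot\gamma)$ and the identity \eqref{ek1.1}. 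The principal obstacle is the uniform Hilbert--Schmidt estimate itself: with no pointwise bound on $D^2\varphi_n$ and only $1$-convexity at one's disposal, the cubic terms arising from differentiation must be absorbed through Bochner cancellation driven purely by Fisher information, and this is exactly where the strengthened assumption $\sqrt g \in W^{2,1}(\gamma)$ buys more than the mere finiteness of entropy used in the earlier work.
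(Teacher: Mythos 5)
Your overall scaffolding (conditional expectations $g_n=\mathbb{E}^n_\gamma g$, uniform Hilbert--Schmidt bound from Fisher information, weak limits of $\nabla\varphi_n$, $D^2\varphi_n$, passage to the limit in the finite-dimensional change of variables identity) matches the paper's, and the derivation of the dimension-free bound $\int\|D^2\varphi_n\|_{\mathcal{HS}}^2\,g\,d\gamma\le \mathrm{I}_\gamma g$ by differentiating the Monge--Amp\`ere identity and Gaussian integration by parts is in the same spirit as the paper's Theorem~2.2 / Proposition~2.10. But there are two genuine gaps in the final step, and they are precisely what the paper regards as the real difficulty.

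First, your passage to the limit in $\log\det_2(\mathrm{I}+D^2\varphi_n)$ relies on $\mathrm{I}+D^2\varphi$ being bounded away from singularity ``because $g_n>0$''. That is not true. The finite-dimensional strict positivity $\mathrm{I}+D^2\varphi_n>0$ is only pointwise and carries no uniform lower bound; nothing in the hypotheses (which allow $g$ to vanish) prevents the limit $\mathrm{I}+D^2\varphi$ from having zero or arbitrarily small eigenvalues on a set of positive $g\cdot\gamma$-measure. The paper does not need any such quantitative lower bound: it proves (Proposition~4.2) that along a subsequence $D^2\varphi_n\to D^2\varphi$ in Hilbert--Schmidt norm $g\cdot\gamma$-a.e., using the monotonicity of $\mathrm{Ent}_\gamma g_n$ and the convexity of $-\log\det_2$ to squeeze $\log\det_2[(\mathrm{I}+D^2\varphi_m)^{-1/2}(\mathrm{I}+D^2\varphi)(\mathrm{I}+D^2\varphi_m)^{-1/2}]\to 0$. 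Weak $L^2$ compactness alone, which you invoke, would not give the pointwise convergence needed to pass to the a.e.\ limit in the nonlinear expression $\det_2$.

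Second, and more seriously, identifying the pointwise limit $F=\lim_n\mathcal{L}\varphi_n$ with $\mathcal{L}\varphi$ requires passing to the limit in the weak (duality) identity
$\int\mathcal{L}\varphi_n\,\xi\,g\,d\gamma=-\int\langle\nabla\varphi_n,\nabla\xi+\xi\,\nabla g/g\rangle\,g\,d\gamma$,
which needs convergence of $\mathcal{L}\varphi_n$ in $L^1(g\cdot\gamma)$, i.e.\ uniform integrability, not merely a.e.\ convergence plus $L^1$-boundedness. Your proposal does not address this; pointwise convergence together with boundedness of $\int|\mathcal{L}\varphi_n|\,g_n\,d\gamma$ gives $F\in L^1(g\cdot\gamma)$ by Fatou, but not $F=\mathcal{L}\varphi$. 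The paper's central and novel estimate is precisely the weighted bound
$\sup_n\int\frac{(\mathcal{L}\varphi_n)^2}{1+|\nabla\varphi_n|^2}\,g\,d\gamma\le 16\,\mathrm{I}_\gamma g$,
obtained by integrating $(\mathcal{L}\varphi_n)^2 u(|\nabla\varphi_n|^2)$ by parts with a carefully chosen cutoff $u(t)=1/(1+t)$; combined with the $L^1(g\cdot\gamma)$-convergence of $|\nabla\varphi_n|^2$ this yields the needed uniform integrability. Your plan omits this step entirely, and it does not follow from anything you wrote. A side remark: your appeal to the earlier results on $D^2_a\varphi$ in \cite{BoKo2005}, \cite{BoKo2006} to kill the singular part is also not available here, since those results assume $g\ge c>0$, which the present theorem explicitly drops.
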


The definitions of Sobolev classes and Sobolev derivatives are recalled below
as well as the meaning of $\nabla\varphi$ and $\mathcal{L}\varphi$. Under the additional assumption that
$1/g\in L^r(\gamma)$ with some $r>1$ we have more: $\varphi\in W^{2r/(1+r),2}(\gamma)$,
so that $\mathcal{L}\varphi$ exists in the sense of $W^{2r/(1+r),2}(\gamma)$.

Thus, now the infinite-dimensional change of variables formula is established under the assumptions
comparable to those in the finite-dimensional case.

We recall that $\mbox{I}_\gamma g$
is called Fisher's information of $g$ and the quantity
$$
\mbox{Ent}_{\gamma} g  :=
\int g \log g \ d\gamma
$$
is called the entropy of $g$ (with respect to the measure $\gamma$).

Finally, under some additional assumptions we show  higher differentiability of~$\varphi$.

Given an operator $A$ on a Hilbert space $H$ we set
$$
M(A):=\sup \bigl\{(Ah,h)\colon\, |h|_H\le 1\bigr\}.
$$
If $A$ is symmetric nonnegative, then $M(A)=\|A\|$ is the operator
norm of~$A$;  a~general bounded symmetric operator $A$ can be written
as $A=A^{+}-A^{-}$ with uniquely defined
nonnegative symmetric operators
$A^{+}$ and $A^{-}$ such that \mbox{$A^{+}A^{-}=A^{-}A^{+}=0$} and then
$M(A)=\|A^{+}\|$; the operator $A^{+}$ is called the nonnegative
part of a symmetric operator~$A$. Obviously, we always have
$M(A)\le \|A\|$.
Another new result of this paper is the following theorem.

\begin{theorem}\label{t1.2}
Assume that $g>0$ a.e.,
$g\in W^{2,1}(\gamma)\cap W^{1,2}(\gamma)$,
 and for some $p\in (1,2)$, letting $v:=-\log g$ and
 $\partial_{h}^2 v:=-\partial_h^2g/g+|\partial_h g|^2/g^2$, one has
\begin{equation}\label{ek1.4}
|\nabla g/g|\in L^{2p/(2-p)}(g\cdot \gamma)
\quad
\hbox{and}
\quad
M(\mbox{\rm{I}} + D^2 v)\in L^{p/(2-p)}(g \cdot \gamma).
\end{equation}
Let $\{e_i\}$ be an orthonormal basis in $H$. Then
$$
\int \Big( \sum_{i=1}^{\infty}
\| \partial_{e_i}(D^2 \varphi)\|^2_{\mathcal{HS}} \Bigr)^{p/2} \ g  d\gamma \le
  \biggl( \int M(\mbox{\rm{I}} + D^2 v)^{p/(2-p)}  \ g d\gamma \biggr)^{(2-p)/2}
\cdot (\mbox{\rm I}_{\gamma} g)^{p/2}.
$$
In addition, for $p=2$ one has
$$
\int \sum_{i=1}^{\infty}
\| \partial_{e_i}(D^2 \varphi)\|^2_{\mathcal{HS}} \ g d\gamma \le
  \| M(\mbox{\rm{I}} + D^2 v) \|_{L^{\infty}(g \cdot \gamma)}
\cdot \mbox{\rm I}_{\gamma} g.
$$
\end{theorem}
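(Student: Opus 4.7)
The strategy is to pass through smooth finite-dimensional approximations, derive a Bochner-type identity from the Monge--Amp\`ere equation of Theorem 1.1, integrate it against $g \cdot \gamma$ to extract a weighted $L^2$ bound on $D^3\varphi$, and then apply H\"older's inequality. I first approximate $g$ by smooth strictly positive densities $g_n$ on $\mathbb{R}^n$ (conditional expectation onto the first $n$ coordinates followed by Ornstein--Uhlenbeck mollification); for each $n$ the corresponding potential $\varphi_n$ is a classical smooth solution of
$$v_n = -\log\det A_n + \langle x, \nabla\varphi_n\rangle + \tfrac{1}{2}|\nabla\varphi_n|^2, \qquad A_n := {\rm I} + D^2\varphi_n,\ T_n := x + \nabla\varphi_n,$$
and the hypotheses (\ref{ek1.4}) transfer to $g_n$ with uniform bounds.

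Differentiating the Monge--Amp\`ere identity once in direction $e_i$ produces the pointwise equation
$$\operatorname{tr}(A_n^{-1}D^2\partial_i\varphi_n) = -\partial_i v_n + \partial_i\varphi_n + \langle T_n, \partial_i\nabla\varphi_n\rangle, \qquad (*)$$
and a second differentiation in $e_i$, summed over $i$, yields the Bochner-type identity
$$\sum_i \|A_n^{-1/2}(D^2\partial_i\varphi_n)A_n^{-1/2}\|^2_{\mathcal{HS}} = \Delta v_n + \operatorname{tr}(A_n^{-1}D^2\Delta\varphi_n) - 2\Delta\varphi_n - \|D^2\varphi_n\|^2_{\mathcal{HS}} - \langle T_n, \nabla\Delta\varphi_n\rangle.$$
The key step is to integrate this (or an appropriate test-function variant of $(*)$) against $g_n\, d\gamma$, apply Gaussian integration by parts, and use the basic identity $\int \mathcal{L}v_n \cdot g_n\, d\gamma = {\rm I}_\gamma g_n$ together with $(*)$ itself to cancel the remaining first-order terms; the expected outcome is the weighted $L^2$ bound
$$\int \sum_i \|A_n^{-1/2}(D^2\partial_i\varphi_n)A_n^{-1/2}\|^2_{\mathcal{HS}}\, g_n\, d\gamma \le {\rm I}_\gamma g_n.$$

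The passage to the unweighted norm $\|D^2\partial_i\varphi_n\|^2_{\mathcal{HS}}$ uses the matrix inequality $\|B\|^2_{\mathcal{HS}} \le \|A\|^2_{\mathrm{op}}\|A^{-1/2}BA^{-1/2}\|^2_{\mathcal{HS}}$ for symmetric $A>0$, combined with the pointwise estimate $\|A_n(x)\|^2_{\mathrm{op}} \le M({\rm I}+D^2v_n(x))$; the latter one derives by evaluating the twice-differentiated Monge--Amp\`ere identity on the principal eigenvector of $A_n$ and exploiting the positive semidefiniteness of the matrix $(h,k) \mapsto \operatorname{tr}\bigl((A_n^{-1}D^2\partial_h\varphi_n)(A_n^{-1}D^2\partial_k\varphi_n)\bigr)$. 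Together these give
$$\int \sum_i \|D^2\partial_i\varphi_n\|^2_{\mathcal{HS}}\, M({\rm I}+D^2v_n)^{-1}\, g_n\, d\gamma \le {\rm I}_\gamma g_n,$$
and H\"older with conjugate exponents $2/p$ and $2/(2-p)$, applied to the split $\bigl(\sum_i \|D^2\partial_i\varphi_n\|^2_{\mathcal{HS}}\bigr)^{p/2} = \bigl(\sum_i \|D^2\partial_i\varphi_n\|^2_{\mathcal{HS}} M^{-1}\bigr)^{p/2}\cdot M^{p/2}$, yields the desired $L^p$ estimate for $\varphi_n$; the case $p=2$ is immediate from the trivial $L^\infty$--$L^1$ pairing. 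Passing to the limit $n \to \infty$ via lower semicontinuity of the Sobolev-type norms on the left finishes the argument.

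The main obstacle is twofold. First, the pointwise matrix inequality $\|A_n\|^2_{\mathrm{op}} \le M({\rm I}+D^2 v_n)$ is what links the transport Hessian $A_n$ to the source potential $v_n$ and allows the weighted bound to be recast with the correct factor $M^{-1}$; deriving it uses the full strength of the twice-differentiated Monge--Amp\`ere identity, not only its trace. Second, the integration-by-parts bookkeeping must reduce the naive right-hand side of the Bochner identity (which contains $\operatorname{tr}(A_n^{-1}D^2\Delta\varphi_n)$, $\Delta\varphi_n$, and $\langle T_n, \nabla\Delta\varphi_n\rangle$) to ${\rm I}_\gamma g_n$ alone, with all remainder terms either canceling via $(*)$ applied once more or being absorbed. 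Both of these steps must be carried out with bounds uniform in the dimension so that the final inequality survives the limit.
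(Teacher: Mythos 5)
Your overall strategy---finite-dimensional approximation, a Bochner-type computation yielding a weighted $L^2$ bound for third derivatives, a matrix inequality to pass to the unweighted Hilbert--Schmidt norm, a Caffarelli-type estimate on $\|D^2\Phi\|$, H\"older's inequality, and a limiting argument---is structurally the same as the paper's (which reduces Theorem \ref{t1.2} to the finite-dimensional Theorem \ref{t3.2}, whose proof in turn combines Theorem \ref{t2.2}, Remark \ref{rem2.3}, Theorem \ref{t3.1}, and H\"older). Your Bochner-type identity is the differentiated Monge--Amp\`ere equation that underlies identity (\ref{ek2.2}); the resulting weighted $L^2$ bound
$\sum_i \int \mbox{Tr}\bigl[(D^2\Phi)^{-1}(D^2\Phi)_{x_i}\bigr]^2 g\,d\gamma \le \mbox{I}_\gamma g$
is exactly what the paper quotes, although you leave the integration-by-parts cancellations as a promise rather than a verification.

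The genuine gap is the claimed \emph{pointwise} estimate
$\|A_n(x)\|^2_{\mathrm{op}} \le M(\mbox{I} + D^2 v_n(x))$
evaluated at the same $x$ on both sides. That is false in general. The Caffarelli contraction estimate is a \emph{global} maximum-principle bound, $\sup_x\|D^2\Phi(x)\|^2 \le \sup_x M(\mbox{I}+D^2 v(x))$, and it does not localize: the Hessian of the transport potential at a point is determined by the full density, not by $D^2 v$ at that point. (In the Gaussian-to-Gaussian case $g(x)=c\,e^{-a^2x^2/2}$ one has equality $\Phi''(x)^2 = 1+v''(x)$, but flattening $v$ near a single point while keeping the global shape of $g$ leaves $\Phi''$ essentially unchanged there and drives $1+v''$ down to $1$, breaking the pointwise inequality.) Accordingly, the paper's estimate (\ref{ek3.1}) is only an integral inequality
$\int\|D^2\Phi\|^{2p}g\,d\gamma \le \int M(\mbox{I}+D^2 v)^p g\,d\gamma$,
and only the $L^\infty$ version (\ref{ek3.2}) involves a supremum on the right. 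Your intermediate bound
$\int\sum_i\|D^2\partial_i\varphi_n\|^2_{\mathcal{HS}}\,M(\mbox{I}+D^2 v_n)^{-1}\,g_n\,d\gamma \le \mbox{I}_\gamma g_n$,
and the subsequent H\"older split with the weight $M^{\pm p/2}$, both rest on the pointwise version and therefore do not go through for $p<2$.

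The repair is to reorganize the H\"older step as the paper does. From the matrix inequality one has pointwise
$\bigl(\sum_i \|(D^2\Phi)_{x_i}\|^2_{\mathcal{HS}}\bigr)^{p/2}\le \|D^2\Phi\|^p\bigl(\sum_i \mbox{Tr}[A^{-1}(D^2\Phi)_{x_i}]^2\bigr)^{p/2}$.
Integrate against $g\,d\gamma$ and apply H\"older with exponents $2/(2-p)$ and $2/p$ to obtain
$\bigl(\int\|D^2\Phi\|^{2p/(2-p)}g\,d\gamma\bigr)^{(2-p)/2}\cdot(\mbox{I}_\gamma g)^{p/2}$,
and then bound the first factor by $\bigl(\int M(\mbox{I}+D^2 v)^{p/(2-p)}g\,d\gamma\bigr)^{(2-p)/2}$ using (\ref{ek3.1}) with exponent $p/(2-p)$ in place of $p$. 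This avoids any pointwise Caffarelli estimate. Your $p=2$ case is unaffected, since there the global $L^\infty$-Caffarelli bound (\ref{ek3.2}) is sufficient.
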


It should be noted that in many cases the first inclusion in (\ref{ek1.4}) follows from the
second one.

 Sections 2 and 3 are devoted to certain dimension free estimates,
which will be employed in Section 4 in the proof of the main result.
Since the proof relies on some technical improvements of a number of our earlier
results and estimates, we include the complete formulations of the corresponding
results with some explanations or proofs  where appropriate.

\section{Finite-dimensional estimates}

Let  $\gamma$ be the standard Gaussian measure on $\mathbb{R}^d$ and let $g \cdot \gamma$ be a probability measure
absolutely continuous with respect to~$\gamma$.
Consider the optimal transportation $T = \nabla \Phi$ of
$g \cdot \gamma$  to $\gamma$, where  $\Phi$ is the corresponding potential.
It is related to $\varphi$ above by the  equality $\nabla\Phi=I+\nabla\varphi$, i.e.
$\Phi(x)=\varphi(x)+\langle x,x\rangle /2$, where
$\langle x,y\rangle$ is the standard inner product in~$\mathbb{R}^d$.
Denote by $\| \cdot \|$ the operator norm and by $\| \cdot \|_{\mathcal{HS}}$
the Hilbert--Schmidt norm; $|\,\cdot\,|$ is the usual norm in~$\mathbb{R}^d$.

The Sobolev class $W^{2,1}(\gamma)$ with respect to the standard Gaussian measure $\gamma$
on $\mathbb{R}^d$ consists of all functions $f\in L^2(\gamma)$ such that
$f$ belongs to the usual local Sobolev class $W^{2,1}_{loc}(\mathbb{R}^d)$ and
$|\nabla f|\in L^2(\gamma)$. The partial derivative of a mapping $G$ with respect to $x_i$
(pointwise or Sobolev) will be denoted by $\partial_{x_i}G$ or by $G_{x_i}$.
Using $L^p$-norms and derivatives up to order $r$ one defines the classes $W^{p,r}(\gamma)$.

Some of the conditions employed below are naturally expressed in terms of certain
weighted Sobolev
spaces. Let us give definitions.
Let $\mu=g\cdot\gamma$ be a probability measure on $\mathbb{R}^d$ with $\sqrt{g}\in W^{2,1}_{loc}(\mathbb{R}^d)$.
We assume throughout that (\ref{ek1.3}) holds.
By the Gaussian log-Sobolev inequality  $g \log g \in L^1(\gamma)$.
Therefore, the coordinate functions $x_i$ belong to $L^2(g\cdot\gamma)$.
Let us introduce the Sobolev classes with respect to the measure $ g \cdot \gamma$.

\begin{definition}
We say that $f \in L^2(g \cdot \gamma)$ has the Sobolev derivative $f_{x_i} \in L^2(g \cdot \gamma)$
with respect to $x_i$ if, for every smooth compactly supported function  $\xi$ on $\mathbb{R}^d$, one has
\begin{equation}
\label{ek2.1}
\int \xi_{x_i} f \ g d\gamma = - \int \xi f_{x_i} \ g d \gamma + \int \xi f
\Bigl( x_i - \frac{g_{x_i}}{g}\Bigr) \ g d\gamma.
\end{equation}
\end{definition}

We observe  that the integrals in  (\ref{ek2.1})
are well-defined since $x_i - g_{x_i}/g \in L^2(g \cdot \gamma)$.

The space $G^{2,1}(g \cdot \gamma)$ consists of all
functions $f$ such that $f \in L^2(g \cdot \gamma)$ and
$$
\int |\nabla f|^2 \ g d\gamma = \sum_i \int |f_{x_i}|^2 \ g d\gamma  < \infty,
$$
where $f_{x_i}$ exists in the sense of the previous definition.
One can show (see \cite[Theorem 2.6.11]{Bdifm}) that $G^{2,1}(g\cdot\gamma)$ coincides with the completion
of $C_0^\infty(\mathbb{R}^d)$ with respect to the Sobolev norm
$\|f\|_{L^2(\mu)}+\bigl\| |\nabla f|\bigr\|_{L^2(\mu)}$; the latter class is denoted by
the symbol $W^{2,1}(g\cdot\gamma)$.

In the same way one defines the second Sobolev derivative $D^2 f$.
The Sobolev space $G^{1,2}(g \cdot \gamma)$  consists of
all functions $f$ such that
$$
\int |f| \ g  d\gamma +   \int |\nabla f| \ g d \gamma
+ \int \|D^2 f\|_{\mathcal{HS}}\ g d\gamma < \infty,
$$
where the derivatives are defined in the sense of (\ref{ek2.1}).

Most of the results  of this section are proven in  \cite{Kol2010}. For the reader's
convenience we give some proofs and explanations.

Throughout we repeatedly use the assumption that $\sqrt{g} \in W^{2,1}(\gamma)$
for a probability density $g$ with respect to $\gamma$, which means that $g$ has finite Fisher's
information and is equivalent to the inclusion $g\in W^{1,1}(\gamma)$ along with
$|\nabla g/g|\in L^2(g\cdot \gamma)$, where we set $\nabla g/g=0$ on the set $\{g=0\}$.
Indeed, the integrability of $|\nabla g|$ against $\gamma$ follows
from the inclusions $|\nabla g|/\sqrt{g}, \sqrt{g}\in L^2(\gamma)$.

\begin{theorem}\label{t2.2}
 {\rm(\cite{Kol2010})}
 Let $\mu = g \cdot \gamma$ be a probability measure on $\mathbb{R}^d$
 and $\sqrt{g} \in W^{2,1}(\gamma)$. If $g$ and $\Phi$ are smooth
 {\rm(}$g$ is twice continuously differentiable,
 $\Phi$ is four times continuously differentiable{\rm)}, then the following
identity holds{\rm:}
\begin{multline}\label{ek2.2}
 \mbox{\rm I}_{\gamma} g  =   \int \frac{|\nabla g|^2}{g} \ d\gamma
  =   2 \mbox{\rm Ent}_{\gamma} g
- 2 \int \log {\det}_2 (D^2 \Phi)\ g d\gamma \, +
\\
+
 \int \|D^2 \Phi -\mbox{\rm I} \|^2_{\mathcal{HS}} \  g d\gamma
 +
\sum_{i=1}^{d} \int \mbox{\rm Tr}
\bigl[ (D^2 \Phi)^{-1} (D^2 \Phi)_{x_i}\bigr]^2 \ g d\gamma,
\end{multline}
where
$
{\det}_2 (D^2 \Phi)
$
  is the Fredholm--Carleman determinant  of $D^2 \Phi$.
\end{theorem}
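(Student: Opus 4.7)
The proof starts from the finite-dimensional Monge--Amp\`ere equation. Since $(\nabla\Phi)_*(g\cdot\gamma)=\gamma$, the standard change of variables gives, after taking logarithms,
\[
\log g = \log\det A + \tfrac12(|x|^2-|\nabla\Phi|^2), \qquad A:=D^2\Phi,
\]
an identity I will use both pointwise and after differentiation. My plan is to compute $\mathrm{I}_\gamma g$ in two ways and match the results against~\eqref{ek2.2}. First, multiplying the displayed equation by $g$ and integrating against $\gamma$, using $\int|\nabla\Phi|^2 g\,d\gamma=d$ (from the pushforward) together with the algebraic decompositions $\log\det A=\log\det_2 A+(\Delta\Phi-d)$ and $\|A-\mathrm{I}\|_{\mathcal{HS}}^2=\|A\|_{\mathcal{HS}}^2-2\Delta\Phi+d$, yields
\[
2\,\mathrm{Ent}_\gamma g - 2\int \log\det_2 A\,g\,d\gamma + \int \|A-\mathrm{I}\|_{\mathcal{HS}}^2\,g\,d\gamma = \int \|A\|_{\mathcal{HS}}^2 g\,d\gamma + \int(|x|^2-2d)\,g\,d\gamma.
\]

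Second, using $\mathrm{I}_\gamma g = -\int \mathcal{L}(\log g)\,g\,d\gamma$ (standard Gaussian integration by parts), apply $-\mathcal{L} = -\Delta + \langle x,\nabla\cdot\rangle$ to the logarithmic Monge--Amp\`ere equation. The Laplacian of $\log\det A$ expands (via the chain rule and $\partial_k(A^{-1})=-A^{-1}A_{x_k}A^{-1}$) as $-\sum_k\mathrm{Tr}[(A^{-1}A_{x_k})^2]+\sum_k\mathrm{Tr}(A^{-1}A_{x_kx_k})$, while Bochner's identity gives $\tfrac12\Delta|\nabla\Phi|^2=\|A\|_{\mathcal{HS}}^2+\langle\nabla\Phi,\nabla\Delta\Phi\rangle$. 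Combining these, one obtains an expression for $\mathrm{I}_\gamma g$ that agrees with the right-hand side of~\eqref{ek2.2} (after substituting the first identity above) up to the single residual $R := \int\langle\nabla\Phi,\nabla\Delta\Phi\rangle\,g\,d\gamma - \sum_k\int\mathrm{Tr}(A^{-1}A_{x_kx_k})\,g\,d\gamma$. The identity~\eqref{ek2.2} therefore reduces to $R=0$, i.e.\
\[
\sum_k\int \mathrm{Tr}(A^{-1}A_{x_kx_k})\,g\,d\gamma = \int\langle\nabla\Phi,\nabla\Delta\Phi\rangle\,g\,d\gamma.
\]

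To prove this last identity I rewrite its left-hand side as $\int\mathrm{Tr}[A^{-1}D^2 u]\,g\,d\gamma$ with $u:=\Delta\Phi$ and integrate by parts in each coordinate $j$ against the Gaussian. The crucial algebraic step is the row-divergence formula $\sum_j\partial_j(A^{-1})_{ij} = -(A^{-1}\nabla\log\det A)_i$, which follows directly from $(A^{-1})_{x_j}=-A^{-1}A_{x_j}A^{-1}$ together with $\partial_k\log\det A=\mathrm{Tr}(A^{-1}A_{x_k})$. After this IBP the integral becomes $\int \langle \nabla u,\,A^{-1}(\nabla\log\det A - \nabla\log g + x)\rangle\,g\,d\gamma$, and the differentiated Monge--Amp\`ere equation $\nabla\log g = \nabla\log\det A + (x-A\nabla\Phi)$ collapses the bracketed vector exactly to $A\nabla\Phi$, giving $\int\langle\nabla u,\nabla\Phi\rangle g\,d\gamma = \int\langle\nabla\Phi,\nabla\Delta\Phi\rangle g\,d\gamma$. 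The main obstacle in the whole argument is precisely this last identity: the integrations by parts are routine but the bookkeeping is delicate, and it is here that the \emph{transport structure} of $\Phi$ enters essentially, via the fortunate coincidence of the $A^{-1}$-divergence formula with the first-order Monge--Amp\`ere relation. All manipulations are classically justified under the assumed $C^2$ regularity of $g$ and $C^4$ regularity of $\Phi$.
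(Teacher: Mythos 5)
The paper states Theorem \ref{t2.2} as a citation from \cite{Kol2010} and does not reproduce the proof, so there is no in-paper argument to compare against; I evaluate your proof on its own terms, and it is correct. Your two-pronged computation does reduce everything to $R=0$: I have checked that the remaining $x$-terms you do not display cancel, because by the differentiated Monge--Amp\`ere relation and Gaussian integration by parts
$$
\int\langle x,\nabla\log\det A - A\nabla\Phi\rangle\, g\,d\gamma
=\int\langle x,\nabla\log g - x\rangle\, g\,d\gamma = -d,
$$
which exactly absorbs the constant discrepancy. Your verification of $R=0$ via the row-divergence identity and the first-order MA relation is also sound. One small inaccuracy worth flagging: you assert that $\sum_j\partial_j(A^{-1})_{ij}=-(A^{-1}\nabla\log\det A)_i$ ``follows directly'' from $\partial_j(A^{-1})=-A^{-1}A_{x_j}A^{-1}$ and Jacobi's formula $\partial_k\log\det A=\mathrm{Tr}(A^{-1}A_{x_k})$. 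That is not quite right: these two facts alone give only $\sum_j\partial_j(A^{-1})_{ij}=-\sum_j(A^{-1}A_{x_j}A^{-1})_{ij}$, and to turn the right-hand side into $-(A^{-1}\nabla\log\det A)_i$ you must also invoke the total symmetry $\partial_jA_{kl}=\partial_lA_{kj}$, i.e.\ that $A=D^2\Phi$ is a Hessian, to commute the indices inside the sum. You do gesture at ``transport structure'' at the end of the paragraph, but you attribute it entirely to the first-order MA relation; in fact the Hessian structure of $D^2\Phi$ is used independently and earlier, in the divergence formula itself. This is the point where the proof uses that $T$ is a gradient and not merely a monotone map.
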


\begin{remark}\label{rem2.3}
\rm
We know that $\mbox{\rm Ent}_{\gamma} g < \infty$. In addition, by Jensen's inequality
$\mbox{\rm Ent}_{\sigma} \varrho\ge 0$ for any probability density~$\varrho$ with respect
to any probability measure~$\sigma$.
The other integrals on the  right-hand side
of (\ref{ek2.2}) are finite because all these expressions  are nonnegative
(since $D^2\Phi\ge 0$), so that
every term on the right is separately majorized by $\mbox{\rm I}_{\gamma} g$, that is,
$$
   -  \int \log {\det}_2 ( (D^2 \Phi)^2)\ g  d\gamma\le \mbox{\rm I}_{\gamma} g,
 $$
 $$
\sum_{i=1}^{d} \int \mbox{\rm Tr} \bigl[ (D^2 \Phi)^{-1} (D^2 \Phi)_{x_i} \bigr]^2
\ g  d\gamma \le \mbox{\rm I}_{\gamma} g.
$$
By looking at the term $2 \mbox{\rm Ent}_{\gamma} g$,
one can also consider (\ref{ek2.2}) as a stronger version of the Gaussian  log-Sobolev inequality.
\end{remark}

Moreover, (\ref{ek2.2}) along with some additional assumption
implies (see Section~3) that
$$
 \int \Bigl( \sum_{i=1}^{d}  \| (D^2 \Phi)_{x_i} \|^2_{\mathcal{HS}} \Bigr)^{1/2}
 \ g  d\gamma < \infty.
$$

It is easy to see that   (\ref{ek2.2})
makes sense for the infinite-dimensional optimal
transportation $T = {\rm I} + \nabla \varphi$ (where $\gamma$
is the standard Gaussian measure on $\mathbb{R}^{\infty}$,
i.e., the countable power of the standard
Gaussian measure on the real line).
In the infinite-dimensional case, considered in the last section,
(\ref{ek2.2}) takes the form
\begin{align}
\label{ek2.3}
 \mbox{\rm I}_{\gamma} g  & =   2 \mbox{\rm Ent}_{\gamma} g
- 2 \int \log {\det}_2 ({\rm I}+D^2 \varphi)\ g  d\gamma \, +
\\& \nonumber
+
 \int \|D^2 \varphi \|^2_{\mathcal{HS}} \  g  d\gamma
 +
\sum_{k=1}^{\infty} \int \mbox{\rm Tr} \bigl[ ({\rm I} + D^2 \varphi)^{-1}
(D^2\varphi)_{x_k} \bigr]^2 \ g  d\gamma.
\end{align}
However, this equality is not justified in this paper,
and we do not expect a proof of (\ref{ek2.3}) to be simple
 because of a number of difficult regularity issues (see Section~4).

Recall that in the finite-dimensional case $\Phi$ has generalized second derivatives that are
 bounded Borel measures satisfying the equality
$$
\int \xi_{x_i} \Phi_{x_j} \ dx = - \int \xi \ d \Phi_{x_i x_j}
$$
for every smooth compactly supported function $\xi$
and all $x_i,x_j$. Note that $D^2 \Phi$
is an operator-valued  measure and every $\Phi_{x_i x_i}$  is a nonnegative Borel measure. In addition, the
measure  $D^2 \Phi$ has an absolutely continuous part $D^2_{a} \Phi$ (the so-called second Alexandroff derivative).

We need the following finite-dimensional results.

\begin{proposition}
\label{p2.4}
Given two probability measures $f \cdot \gamma$ and $g \cdot \gamma$ on $\mathbb{R}^d$
with
$$
\mbox{\rm {Ent}}_{g \cdot \gamma} \Bigl( \frac{f}{g} \Bigr)
:= \int f \log \frac{f}{g} \ d\gamma < \infty
$$
and the corresponding optimal
transportation mappings $\nabla \Phi_f$ and $\nabla \Phi_g$ taking
 $f \cdot \gamma$ and $g \cdot \gamma$ to $\gamma$,
the following identity holds:
\begin{align*}
\mbox{\rm {Ent}}_{g \cdot \gamma} \Bigl( \frac{f}{g} \Bigr)  & = \int f \log \frac{f}{g} \ d\gamma \ge
\frac{1}{2} \int \bigl( \nabla \Phi_f - \nabla \Phi_g \bigr)^2 f \ d\gamma \, +
\\&
+  \int \Bigl( \mbox{\rm{Tr}} \bigl[D^2_a \Phi_g \cdot (D^2_a  \Phi_f)^{-1}\bigr]
 -d - \log \det\bigl[ D^2_a \Phi_g \cdot (D^2_a  \Phi_f)^{-1}\bigr]\Bigr)  \ f\  d\gamma.
\end{align*}
\end{proposition}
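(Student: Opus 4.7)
The plan is to combine the pointwise Monge--Amp{\`e}re equations satisfied by the two Brenier maps with a Gaussian integration by parts, obtaining equality in the smooth case and the claimed inequality in general once the nonnegative singular part of the distributional Hessian of $\Phi_g$ is accounted for. By Brenier's and Alexandroff's theorems, $\nabla\Phi_f$ pushes $f\cdot\gamma$ forward to $\gamma$, and for $f\cdot\gamma$-a.e.~$x$ the change of variables formula gives
\[
\log f(x)=\tfrac{1}{2}\bigl(|x|^2-|\nabla\Phi_f|^2\bigr)+\log\det D^2_a\Phi_f,
\]
with the analogous identity for $g$; subtracting and setting $A:=D^2_a\Phi_g\,(D^2_a\Phi_f)^{-1}$ yields the pointwise identity $\log(f/g)(x)=\tfrac{1}{2}(|\nabla\Phi_g|^2-|\nabla\Phi_f|^2)-\log\det A$. (If $f$ fails to vanish on $\{g=0\}$ the left-hand side of the proposition is $+\infty$, so we may assume otherwise.)

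I would then integrate against $f\cdot\gamma$. Setting $T_f:=\nabla\Phi_f$, $T_g:=\nabla\Phi_g$ and using the polarization $|T_g|^2-|T_f|^2=|T_g-T_f|^2+2T_f\cdot(T_g-T_f)$ together with the moment identity $\int|T_f|^2 f\,d\gamma=\int|y|^2\,d\gamma=d$ (which comes from the push-forward property of $T_f$), one obtains
\[
\mbox{Ent}_{g\cdot\gamma}(f/g)=\tfrac{1}{2}\!\int|T_f-T_g|^2 f\,d\gamma+\int T_f\cdot T_g\,f\,d\gamma-d-\int\log\det A\,f\,d\gamma.
\]
Matching this against the target reduces the entire proposition to the single key estimate
\[
\int T_f\cdot T_g\,f\,d\gamma\;\ge\;\int\mbox{Tr}(A)\,f\,d\gamma. \qquad(\star)
\]

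To prove $(\star)$ I would change variables by $y=T_f(x)$, so that the left-hand side becomes $\int y\cdot V(y)\,d\gamma(y)$ with $V(y):=T_g\circ T_f^{-1}(y)$, and apply Gaussian integration by parts $\int y\cdot V\,d\gamma=\int\mbox{div}\,V\,d\gamma$. In the smooth strictly convex case the chain rule with $DT_f^{-1}=(D^2\Phi_f)^{-1}$ gives $\mbox{div}\,V(y)=\mbox{Tr}(A)\circ T_f^{-1}(y)$, so that after pulling back to $x$-coordinates $(\star)$ holds as an equality. In general $D^2\Phi_g=D^2_a\Phi_g+D^2_s\Phi_g$ is a nonnegative matrix-valued measure and only $D^2_a\Phi_g$ appears in $A$; the inequality direction in $(\star)$ records the fact that the singular contribution $D^2_s\Phi_g\ge 0$ is discarded.

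The main obstacle is justifying the chain rule and integration by parts when the Brenier maps are merely gradients of convex functions. I would handle this by approximating $f,g$ through the Ornstein--Uhlenbeck semigroup (or mollification) by smooth strictly positive densities $f_n,g_n$ with smooth strictly convex potentials, establishing $(\star)$ as an equality for $f_n,g_n$, and passing to the limit using: stability of Brenier maps under such regularization; convergence of $\mbox{Ent}_{g_n\cdot\gamma}(f_n/g_n)$ and of $\int|T_{f_n}-T_{g_n}|^2 f_n\,d\gamma$; and lower semicontinuity of $\int \mbox{Tr}(A_n)f_n\,d\gamma$ under weak convergence of nonnegative matrix-valued measures, which swallows the singular Hessian and produces the $\ge$ sign in $(\star)$.
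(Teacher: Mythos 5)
Your reduction is identical to the paper's: McCann's pointwise change-of-variables formula for both Brenier maps, a polarization of the quadratic term, and isolation of the single cross-term estimate $(\star)$ — the paper merely integrates against $\gamma$ after pushing forward by $S=(\nabla\Phi_f)^{-1}$, whereas you stay in $x$-coordinates and push forward only at the end, which is the same computation.

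The genuine gap is in how you propose to prove $(\star)$. The paper isolates it in Lemma~2.5 as a local distributional inequality $\mbox{div}(\nabla\varphi\circ\nabla\psi)\ge\mbox{\rm Tr}\bigl[D^2_a\varphi(\nabla\psi)\,D^2_a\psi\bigr]\lambda$ for \emph{arbitrary} convex $\varphi,\psi$, and proves it by mollifying the convex potentials one at a time: the mollified Hessians $D^2\psi_\varepsilon$ converge a.e.\ to the Alexandroff Hessian $D^2_a\psi$ (a classical fact about convolutions of convex functions), and Fatou applies because $\mbox{\rm Tr}[D^2\varphi(\nabla\psi_\varepsilon)D^2\psi_\varepsilon]\ge0$ pointwise. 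No Monge--Amp\`ere regularity theory is invoked. Your scheme instead mollifies the \emph{densities} $f,g$, and two things then need justification you do not supply. First, to get $(\star)$ as an equality for $f_n,g_n$ you implicitly need $\Phi_{f_n}$ to be $C^2$ with $D^2\Phi_{f_n}>0$; on all of $\mathbb{R}^d$ with target $\gamma$ this is Caffarelli-type regularity and is not automatic from smoothness and positivity of $f_n$. Second, the limit: $\mbox{\rm Tr}(A_n)f_n=\mbox{\rm Tr}\bigl(D^2\Phi_{g_n}(D^2\Phi_{f_n})^{-1}\bigr)f_n$ contains a matrix inverse, so "lower semicontinuity under weak convergence of nonnegative matrix-valued measures" is not the right tool; and since $\Phi_{f_n}$ is \emph{not} a mollification of $\Phi_f$, the a.e.\ Alexandroff convergence the paper relies on is not available. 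One would instead need a nontrivial a.e.\ stability statement for Hessians of Brenier potentials under perturbation of the source measure. The paper's device of mollifying the potentials, not the densities, is precisely what makes the Fatou argument clean and sidesteps both issues.
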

\begin{proof}
This result has been obtained in \cite{Kol04}, \cite{Kol2010}. For the reader's convenience we give the proof.
Without loss of generality one can assume that  $f \cdot \gamma$
is absolutely continuous with respect to
$g \cdot \gamma$  (otherwise the left-hand side is infinite).
By  McCann's change of variables formula    in $\mathbb{R}^d$
 (see \cite{McCann97}, \cite{Vill})
one has
$$
f(x) e^{-|x|^2/2} = \det D^2_a \Phi_f(x) \cdot e^{-|\nabla \Phi_f(x)|^2/2}
\quad \hbox{$f\cdot \gamma$-a.e.}
$$
 Consequently, letting $S:=(\nabla \Phi_f)^{-1}$, we have
$$
\log f (S(x)) =
 \frac{1}{2} \bigl( |S(x)|^2 - |x|^2 \bigr)
+ \log \det\bigl[ D^2_a \Phi_{f} \bigl(S(x)\bigr)\bigr]
\quad\hbox{$\gamma$-a.e.}
$$
Similarly, applying the change of variables formula for  $g$, we get
$$
\log g (S) = \frac{1}{2} \bigl( |S|^2 - |\nabla \Phi_g (S)|^2 \bigr)
+ \log \det D^2_a \Phi_{g} (S).
$$
Therefore, suppressing indication of $x$ as an argument,
\begin{align*}
 \log   \frac{f(S)}{g(S)}  &=
 \frac{1}{2} \bigl( |\nabla \Phi_g (S)|^2 - |x|^2 \bigr)
- \log \det \bigl[  D^2_a \Phi_{g} \cdot (D^2_a \Phi_{f})^{-1} \bigr](S)=
\\&
= \frac{1}{2} \bigl|\nabla \Phi_g (S) - x \big|^2
+ \langle x, \nabla \Phi_g (S) - x \rangle
- \log \det \bigl[  D^2_a \Phi_{g} \cdot ( D^2_a \Phi_{f})^{-1} \bigr](S).
\end{align*}
Let us integrate this equality with respect to $\gamma$.
Noting that $(\nabla \Phi_f)^{-1} = \nabla \Phi^*_f$,
where $\Phi^*_f$ is the dual function for $\Phi_f$
defined by $\Phi^*_f(x)=\sup_y (\langle x,y\rangle -\Phi_f(y))$,
we obtain from Lemma \ref{lem2.5} below that
\begin{multline*}
\int \langle x, \nabla \Phi_g (S(x)) - x \rangle \ \gamma(dx) \ge
\int \mbox{Tr} \bigl[ D^2_a \Phi_g (S)
 \cdot (D^2_a \Phi_f)^{-1} (S)\bigr] \ d\gamma - d=
\\
= \int \mbox{Tr} \bigl[ D^2_a \Phi_g  \cdot (D^2_a \Phi_f)^{-1}\bigr]  f \ d\gamma - d.
\end{multline*}
Thus, we have
\begin{multline*}
\int \log    \frac{f(S)}{g(S)}\ d\gamma
\ge
\frac{1}{2}\int  \bigl|\nabla \Phi_g (S(x)) - x \bigr|^2  \  \gamma(dx) \, +
\\
+ \int \Bigl[ \mbox{Tr} \bigl(D^2_a \Phi_g  \cdot (D^2_a \Phi_f)^{-1}\bigr)
 - d - \log \det \bigl( D^2_a \Phi_{g} \cdot (D^2_a \Phi_{f})^{-1} \bigr) \Bigr](S) \ d\gamma.
\end{multline*}
Taking into account that $(f \cdot \gamma) \circ (\nabla \Phi_{f})^{-1} = \gamma$ we arrive at the desired result.
\end{proof}

\begin{lemma}
\label{lem2.5}
Let $\varphi\colon\, A \to \mathbb{R}$ and $\psi\colon\, B \to \mathbb{R}$
 be two convex functions on bounded convex sets $A$ and $B$ in $\mathbb{R}^d$,
respectively. Assume that $\nabla \psi(B) \subset A$
and the image of  $\lambda|_{B}$ with respect to $\nabla \psi$, where $\lambda$ is Lebesgue measure,
  is absolutely continuous.
Then
$$
\mbox{\rm div} (\nabla \varphi \circ \nabla \psi)
\ge \mbox{\rm Tr}\bigl[ D^2_a \varphi (\nabla \psi) \cdot D^2_a \psi \bigr] \cdot\lambda \ge 0,
$$
where $\mbox{\rm div}$ is understood in the sense of distributions.
\end{lemma}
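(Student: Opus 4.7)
The plan is a two-stage mollification reducing the claim to the smooth case. In the smooth case ($\varphi,\psi\in C^2$), the chain rule gives the exact identity
$$\mathrm{div}(\nabla\varphi\circ\nabla\psi)=\mathrm{Tr}\bigl[D^2\varphi(\nabla\psi)\cdot D^2\psi\bigr],$$
which is already nonnegative as the trace of a product of two positive semidefinite matrices. The passage to the general convex case rests on the following matrix-measure observation: for any convex $f$ on a convex domain, $D^2 f$ is a positive semidefinite matrix-valued measure, so $D^2 f\ge D^2_a f\cdot\lambda$ as measures; hence its standard mollification $D^2 f_\epsilon=(D^2 f)*\rho_\epsilon$ satisfies $D^2 f_\epsilon\ge(D^2_a f)*\rho_\epsilon$ in the positive semidefinite order, while $(D^2_a f)*\rho_\epsilon\to D^2_a f$ almost everywhere by Lebesgue differentiation.

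First, keep $\varphi$ of class $C^2$ and mollify only $\psi$, setting $\psi_\epsilon=\psi*\rho_\epsilon$, smooth and convex on compact subsets of $\mathrm{int}\,B$. Fix a nonnegative $\xi\in C_c^\infty(\mathrm{int}\,B)$ with $\nabla\psi(\mathrm{supp}\,\xi)$ relatively compact in $\mathrm{int}\,A$, and test the smooth identity applied to $(\varphi,\psi_\epsilon)$ to get
$$-\int\nabla\xi\cdot(\nabla\varphi\circ\nabla\psi_\epsilon)\,dx=\int\xi\,\mathrm{Tr}\bigl[D^2\varphi(\nabla\psi_\epsilon)\cdot D^2\psi_\epsilon\bigr]\,dx.$$
Bound the right-hand side below using $D^2\psi_\epsilon\ge(D^2_a\psi)*\rho_\epsilon$ together with monotonicity of $A\mapsto\mathrm{Tr}(CA)$ for positive semidefinite $C=D^2\varphi(\nabla\psi_\epsilon)$. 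Sending $\epsilon\to0$, dominated convergence handles the left-hand side (convex $\psi$ is locally Lipschitz and $\nabla\psi_\epsilon\to\nabla\psi$ a.e.), while Fatou, combined with continuity of the smooth $D^2\varphi$, gives the intermediate inequality
$$-\int\nabla\xi\cdot(\nabla\varphi\circ\nabla\psi)\,dx\ge\int\xi\,\mathrm{Tr}\bigl[D^2\varphi(\nabla\psi)\cdot D^2_a\psi\bigr]\,dx.$$

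Now repeat the procedure in $\varphi$: set $\varphi_\delta=\varphi*\rho_\delta$, apply the intermediate inequality to $(\varphi_\delta,\psi)$, and let $\delta\to0$. On the left, $\nabla\varphi_\delta\to\nabla\varphi$ a.e.\ on $\mathrm{int}\,A$, and this convergence is transported to $\mathrm{int}\,B$ through $\nabla\psi$ because $(\nabla\psi)_*(\lambda|_B)\ll\lambda$ forces Lebesgue null sets in $A$ to pull back to Lebesgue null sets in $B$; dominated convergence then applies. On the right, the same matrix-measure inequality $D^2\varphi_\delta\ge(D^2_a\varphi)*\rho_\delta$ and Lebesgue differentiation give $(D^2_a\varphi)*\rho_\delta\to D^2_a\varphi$ a.e.\ on $\mathrm{int}\,A$, composed with $\nabla\psi$ via the same absolute-continuity principle, and a final Fatou step yields the claim.

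The main obstacle is precisely this pull-back of almost-everywhere statements through $\nabla\psi$: the composition $D^2_a\varphi(\nabla\psi)$ would not even be well-defined as a measurable function modulo null sets without the hypothesis that $(\nabla\psi)_*(\lambda|_B)$ is absolutely continuous, and that hypothesis is exactly what legitimizes the last Fatou argument. The second inequality $\mathrm{Tr}[D^2_a\varphi(\nabla\psi)\cdot D^2_a\psi]\ge 0$ is automatic as the trace of a product of two positive semidefinite matrices, and the remaining technical care consists in choosing $\xi$ so that all convolutions remain well-defined inside the interiors of $A$ and $B$.
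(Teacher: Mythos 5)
Your argument is essentially the paper's own: a two-stage mollification, first in $\psi$ with $\varphi$ smooth, then in $\varphi$ with $\psi$ fixed, passing to the limit by dominated convergence on the left and Fatou on the right, with the absolute continuity of $(\nabla\psi)_*(\lambda|_B)$ invoked exactly where the paper invokes it (to make the composition and the transport of a.e.\ statements through $\nabla\psi$ legitimate). The only cosmetic difference is on the right-hand side: you insert the matrix-measure lower bound $D^2 f_\epsilon\ge (D^2_a f)*\rho_\epsilon$ and then apply Lebesgue differentiation, whereas the paper goes directly via $D^2\psi_\epsilon\to D^2_a\psi$ a.e.\ (observing that the mollified singular part tends to zero a.e.); these are two routes to the same fact and both are standard. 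The proof is correct.
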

\begin{proof}
It suffices to verify this property for any ball $B$ whose closure is contained
in the interior of the original set. So we may assume that $\psi$ is defined
in a neighborhood of~$B$. Note that $|\nabla \varphi (\nabla\psi(x))|$ is locally
Lebesgue integrable, since $|\nabla \varphi|$ is locally bounded and the image of Lebesgue
measure under the mapping $\nabla\psi$ has a density.
Clearly, for smooth functions the first inequality becomes an identity.
Assume that only  $\varphi$ is smooth.
Let us approximate $\psi$ by smooth functions
$$
\psi_{\varepsilon}(x) = \int \psi(x-y) \varrho_{\varepsilon}(y) \ dy,
$$
where $\varrho_{\varepsilon}(x) = \varepsilon^{-d}\varrho(x/\varepsilon)$ and $\varrho$ is a smooth compactly supported probability
density.
It is well-known that $\nabla \psi_{\varepsilon} \to \nabla \psi $, $D^2  \psi_{\varepsilon} \to D^2_a\psi$
a.e. with respect to Lebesgue measure. This follows from the known fact that,
given two probability measures $\mu_1$ and $\mu_2$, a limit
$\lim\limits_{r \to 0} \mu_2(B_r(x))/\mu_1(B_r(x))$ exists $\mu_1$-a.e.
and it vanishes  $\mu_1$-a.e. if the measures are mutually singular.
This fact implies that $D^2 \psi_{sing} * \varrho_{\varepsilon}$ tends to zero a.e.
with respect to Lebesgue measure.
Hence for any function $\xi\in C_0^\infty(B)$ we have
\begin{align*}
\int \xi &  \cdot \mbox{div}(\nabla \varphi\circ \nabla \psi)  \ dx =
- \int \langle \nabla\xi,  \nabla \varphi\circ \nabla \psi \rangle \ dx =
- \lim_{\varepsilon \to 0} \int \langle \nabla\xi,
\nabla \varphi\circ \nabla \psi_{\varepsilon} \rangle \ dx=
\\&
= \lim_{\varepsilon \to 0} \int \xi \cdot  \mbox{\rm{Tr}}
\bigl[ D^2 \varphi (\nabla \psi_{\varepsilon}) \cdot D^2 \psi_{\varepsilon} \bigr] \ dx
\ge
\int \xi \cdot  \mbox{\rm{Tr}} \bigl[ D^2 \varphi(\nabla \psi) \cdot D^2_a \psi \bigr] \ dx.
\end{align*}
The last inequality follows from the Fatou theorem.

If the function $\varphi$ is not smooth, keeping   $\psi$ fixed, in the same way
one can construct an approximating sequence $\{\varphi_n\}$ for $\varphi$
 and repeat the above reasoning.
\end{proof}

\begin{remark}
{\rm
Note that
$$
\mbox{Tr} \bigl[ D^2_a \Phi_g  \cdot (D^2_a \Phi_f)^{-1}  \bigr]
- d - \log \det \bigl[ D^2_a \Phi_g  \cdot (D^2_a \Phi_f)^{-1}  \bigr] \ge 0.
$$
Indeed, if the operators $A, B$ are symmetric and nonnegative, then
$$
\mbox{Tr} AB- d - \log \det AB = \mbox{Tr} C- d - \log \det C,
$$
 where the operator $C= B^{1/2} A B^{1/2}$ is symmetric and nonnegative.
 The estimated quantity is $\sum_{i} (c_i -1 - \log c_i) \ge 0$, where
$c_i$ are the eigenvalues of  $C$.
}\end{remark}

A priori estimates for $\varphi$ can be proved with the help of the following
 inequality from \cite{Kol2010} (see Theorem 3.1 and Theorem 4.3 there).
 Let $e^{-V}$ be a probability density on $\mathbb{R}^d$. Then
  $x\mapsto e^{-V(x+e)}$ is a probability density for any vector~$e$.
Applying  Proposition \ref{p2.4}
 to the probability densities $f(x) = (2 \pi)^{d/2} \exp\bigl(-V(x) + |x|^2/2\bigr)$ and
 $g(x) = (2 \pi)^{d/2} \exp\bigl(-V(x+e) + |x|^2/2\bigr)$ with respect to the standard
 Gaussian measure $\gamma$ we arrive at the following result.

\begin{proposition}
For every  vector  $e \in \mathbb{R}^d$, letting $\nabla\Phi$ be
the optimal transportation of $\mu=e^{-V}\, dx$ to $\gamma$, we have
\begin{multline}
\label{ek2.4}
 \int \bigl( V(x+e) - V(x) \bigr)   e^{-V(x)} \ dx
  \ge
  \\
  \ge \int \bigl|  \nabla \Phi(x+e) - \nabla \Phi(x) \bigr|^2    e^{-V(x)} \ dx
      + \int \Bigl(\mbox{\rm Tr} D^2_a \Phi(x+ e)   \cdot  (D^2_a \Phi(x))^{-1}-
   \\
   -  d-  \log \bigl[ {\det} D^2_a \Phi(x+e)  \cdot ({\det} D^2_a\Phi(x))^{-1}\bigr]\Bigr)
   \ e^{-V(x)} \ dx.
\end{multline}
\end{proposition}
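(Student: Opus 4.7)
The plan is to apply Proposition \ref{p2.4} directly to the probability densities $f$ and $g$ exhibited just above the statement, and to translate each of the three resulting terms into the form appearing in (\ref{ek2.4}). First, I would verify what the two measures $f\cdot\gamma$ and $g\cdot\gamma$ are: since $\gamma(dx)=(2\pi)^{-d/2}e^{-|x|^2/2}\,dx$, the prefactors $(2\pi)^{d/2}e^{|x|^2/2}$ in the definitions of $f$ and $g$ cancel the Gaussian density exactly, so $f\cdot\gamma=e^{-V(x)}\,dx=\mu$ and $g\cdot\gamma=e^{-V(x+e)}\,dx$, the latter being the translate of $\mu$ by the vector $-e$.

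Next, I would identify the two optimal transportation mappings appearing in Proposition \ref{p2.4}. By construction $\nabla\Phi_f=\nabla\Phi$ sends $e^{-V(x)}\,dx$ to $\gamma$. The natural candidate for $\nabla\Phi_g$ is $x\mapsto\nabla\Phi(x+e)$: this is the gradient of the convex function $x\mapsto\Phi(x+e)$, so it has the required form, and a change of variables $y=x+e$ shows that it pushes $e^{-V(x+e)}\,dx$ forward to $\gamma$. Uniqueness of the optimal transportation for absolutely continuous source measures then forces $\nabla\Phi_g(x)=\nabla\Phi(x+e)$ and $D^2_a\Phi_g(x)=D^2_a\Phi(x+e)$ almost everywhere.

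With these identifications in hand, substitution is mechanical. The entropy on the left side of Proposition \ref{p2.4} becomes
$$
\int f\log(f/g)\,d\gamma=\int\bigl(V(x+e)-V(x)\bigr)\,e^{-V(x)}\,dx,
$$
the squared-norm term becomes $\int|\nabla\Phi(x+e)-\nabla\Phi(x)|^2\,e^{-V(x)}\,dx$, and, using the step above to rewrite $D^2_a\Phi_g$ and $D^2_a\Phi_f$, the trace--determinant term becomes exactly the expression displayed on the right of~(\ref{ek2.4}).

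The only mildly delicate point is the translation-invariance identity $\nabla\Phi_g(x)=\nabla\Phi(x+e)$: it rests on uniqueness of the optimal transportation map, which is applicable because $e^{-V(x+e)}\,dx$ is absolutely continuous with respect to Lebesgue measure. One should also note that if the hypothesis $\mathrm{Ent}_{g\cdot\gamma}(f/g)<\infty$ required by Proposition \ref{p2.4} fails, then the left-hand side of (\ref{ek2.4}) is $+\infty$ and the inequality is vacuous, so no loss of generality is incurred in assuming it.
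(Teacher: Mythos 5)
Your approach is exactly the paper's: the paper proves this proposition by a single line, namely ``Applying Proposition~\ref{p2.4} to the probability densities $f(x)=(2\pi)^{d/2}\exp(-V(x)+|x|^2/2)$ and $g(x)=(2\pi)^{d/2}\exp(-V(x+e)+|x|^2/2)$ with respect to $\gamma$,'' and you carry out precisely that substitution, correctly supplying the one non-mechanical step the paper leaves implicit --- that $\nabla\Phi_g(x)=\nabla\Phi(x+e)$ by Brenier uniqueness applied to the translated measure. One small caveat worth flagging: Proposition~\ref{p2.4} produces $\tfrac12\int|\nabla\Phi_f-\nabla\Phi_g|^2\,f\,d\gamma$, so the substitution actually yields $\tfrac12\int|\nabla\Phi(x+e)-\nabla\Phi(x)|^2e^{-V(x)}\,dx$ rather than the unweighted integral; the missing $\tfrac12$ is evidently a typo in~(\ref{ek2.4}) itself (as the subsequent remark, where two such terms are added and the limit $t\to0$ is stated without a factor~$2$, confirms), so your transcription faithfully reproduces the paper's displayed constant but should not be read as a literal consequence of Proposition~\ref{p2.4}.
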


\begin{remark}
\rm
The quantity on the left in (\ref{ek2.4})
 is the entropy $\mbox{\rm {Ent}}_{g \cdot \gamma} (f/g)$
with $f(x) = (2 \pi)^{d/2} \exp\bigl(-V(x) + |x|^2/2\bigr)$ and
 $g(x) = (2 \pi)^{d/2} \exp\bigl(-V(x+e) + |x|^2/2\bigr)$.
 Hence it makes sense for every probability density
 $e^{-V} \, dx$. In the worst case it is infinite.
\end{remark}

\begin{remark}
\rm
By \cite[Proposition 4.1]{Kol2010}, for every probability
measure $\mu = e^{-V} \, dx$
with finite second moment, $V\in W^{2,1}_{loc}(\mathbb{R}^d)$
 and the logarithmic derivative $-V_{x_i}$ belonging to $L^2(\mu)$,  the optimal
 transportation $\nabla\Phi$ of $\mu$ to $\gamma$ has the property that
$\partial_{x_i} \Phi \in W^{2,1}(\mu)$ and
$$
\int (V_{x_i})^2 \ d\mu \ge \int | D^2 \Phi \cdot e_i |^2 \ d \mu.
$$
To explain the idea of the proof assume in addition that $V$ and $\Phi$
are twice continuously differentiable and $\|D^2 V\|$ is $\mu$-integrable.
Then  (\ref{ek2.4}) yields that
\begin{multline*}
\int \frac{ V(x+te_i)  + V(x-te_i) - 2V(x) }{t^2} \  e^{-V(x)} \ dx\ge
\\
  \ge \frac{1}{t^2} \int \bigl|  \nabla \Phi(x+te_i) - \nabla \Phi(x) \bigr|^2
   e^{-V(x)} \ dx
 + \frac{1}{t^2} \int \bigl|  \nabla \Phi(x-te_i) - \nabla \Phi(x) \bigr|^2 \
    e^{-V(x)} \ dx.
\end{multline*}
Letting $t \to 0$, we obtain
$$
\int \partial_{e_i}^2 V(x)  e^{-V(x)} \ dx\ge
\int |D^2 \Phi(x) \cdot e_i|^2  e^{-V(x)} \ dx.
$$
Integrating by parts on the left-hand side we get the desired estimate.

Note that the proof of this inequality can be completed  by using
only the incremental quotients and does not rely on the regularity theory for the
Monge-Amp{\`e}re equation (see \cite{Kol2010}).
\end{remark}

The following proposition is formally weaker than
Theorem \ref{t2.2}, but no smoothness of $\Phi$ and $g$
is assumed here.
This inequality it contains will be important below.

\begin{proposition}\label{p2.10}
Let $\sqrt{g} \in W^{2,1}(\gamma)$. Then $\Phi_{x_i} \in L^2(g \cdot \gamma)$
for each~$i$. Moreover, one has $\Phi_{x_i} \in W^{2,1}(g \cdot \gamma)$
and
$$
\mbox{\rm I}_{\gamma} g \ge
\int \| D^2 \Phi - \mbox{\rm{I}} \|^2_{\mathcal{HS}} \ g  d\gamma.
$$
\end{proposition}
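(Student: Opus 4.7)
The first claim is straightforward: writing $\Phi(x)=\varphi(x)+|x|^2/2$, one has $\Phi_{x_i}=\varphi_{x_i}+x_i$, where $\varphi_{x_i}\in L^2(g\cdot\gamma)$ by Talagrand's inequality (recalled in the introduction) and $x_i\in L^2(g\cdot\gamma)$ by a Young-type entropy bound using $g\log g\in L^1(\gamma)$ (which follows from the Gaussian log-Sobolev inequality and $\sqrt{g}\in W^{2,1}(\gamma)$) together with $\int e^{\lambda x_i^2/2}\,d\gamma<\infty$ for small~$\lambda$.

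For the main inequality, the plan is to reduce to the smooth case by approximation. I would construct smooth strictly positive densities $g_n\in C^\infty\cap W^{2,1}(\gamma)$ (say, by first truncating $g$ between $1/n$ and $n$, renormalizing, and then convolving with a smooth compactly supported mollifier), bounded and bounded away from zero on every compact set, so that $g_n\to g$ in $L^1(\gamma)$ and $\mathrm{I}_\gamma g_n\to\mathrm{I}_\gamma g$ (convexity of Fisher information under convolution takes care of the mollification step, and the truncation is a controlled perturbation for large $n$). Because $g_n$ is smooth, positive and bounded on each ball, Caffarelli's regularity theory ensures that the potential $\Phi_n$ of the optimal transport of $g_n\cdot\gamma$ to $\gamma$ is locally smooth enough for Theorem \ref{t2.2} to apply.

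Applying Theorem \ref{t2.2} to $g_n$ and discarding the three nonnegative terms $2\mathrm{Ent}_\gamma g_n$, $-2\int\log{\det}_2(D^2\Phi_n)\,g_n\,d\gamma$ and the trace term on the right-hand side of (\ref{ek2.2}), I obtain the uniform bound
$$
\int \|D^2\Phi_n-\mathrm{I}\|^2_{\mathcal{HS}}\,g_n\,d\gamma\le \mathrm{I}_\gamma g_n.
$$
Stability of optimal transportation in finite dimensions (convergence of the marginals forces convergence of the Brenier potentials) gives $\Phi_n\to\Phi$ locally uniformly up to additive constants and $\nabla\Phi_n\to\nabla\Phi$ in $L^2(g\cdot\gamma)$; together with the previous display this produces a weak limit $H$ of $D^2\Phi_n$ in $L^2(g\cdot\gamma)$. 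Testing against smooth compactly supported $\xi$ and using (\ref{ek2.1}) one identifies $H$ as the Sobolev Hessian of $\Phi$ in the weighted space, so that $\Phi_{x_i}\in W^{2,1}(g\cdot\gamma)$ and the singular part of $D^2\Phi$ as a matrix-valued measure is not detected by $g\cdot\gamma$. Lower semicontinuity of the Hilbert--Schmidt norm under weak $L^2$-convergence then gives
$$
\int \|D^2\Phi-\mathrm{I}\|^2_{\mathcal{HS}}\,g\,d\gamma\le \liminf_n \int \|D^2\Phi_n-\mathrm{I}\|^2_{\mathcal{HS}}\,g_n\,d\gamma\le \liminf_n \mathrm{I}_\gamma g_n=\mathrm{I}_\gamma g.
$$

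The main obstacle I expect is the identification step: going from a weak $L^2(g\cdot\gamma)$-limit of $D^2\Phi_n$ to the Sobolev Hessian of $\Phi$ in the sense of (\ref{ek2.1}). One must pass the integration-by-parts formula with weight $g_n$ to the limit and rule out any contribution of the singular part of the Alexandroff Hessian $D^2\Phi$ on $\{g>0\}$; this requires combining $L^1$-convergence of $g_n$ with enough integrability of the logarithmic derivatives $\nabla g_n/g_n$ (controlled by $\mathrm{I}_\gamma g_n\to\mathrm{I}_\gamma g$) so that the right-hand side of (\ref{ek2.1}) is stable under approximation. Engineering the approximating sequence so that $\mathrm{I}_\gamma g_n\to\mathrm{I}_\gamma g$ and that the logarithmic derivatives behave well under both truncation and mollification is the only other technical but routine point.
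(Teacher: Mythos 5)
Your strategy is genuinely different from the paper's. The paper does not approximate at all: it invokes the a priori Sobolev estimate quoted in Remark~\ref{rem2.3}'s neighbor, namely the result from \cite{Kol2010} recorded in the remark preceding Proposition~\ref{p2.10}, which directly gives $\Phi_{x_i}\in W^{2,1}(g\cdot\gamma)$ together with
$\int|\nabla g/g-x|^2\,g\,d\gamma\ge\int\|D^2\Phi\|^2_{\mathcal{HS}}\,g\,d\gamma$.
From there the paper's argument is purely algebraic: expand $\|D^2\Phi\|^2_{\mathcal{HS}}=\|D^2\Phi-\mbox{\rm I}\|^2_{\mathcal{HS}}+2\Delta\Phi-d$, integrate $|\nabla g/g-x|^2 g$ by parts, use $\int|\nabla\Phi|^2g\,d\gamma=d$ and the finite-dimensional Monge--Amp\`ere identity $\log g=|x|^2/2-|\nabla\Phi|^2/2+\log\det D^2\Phi$, and finally discard $2\,\mathrm{Ent}_\gamma g+2\int(\Delta\Phi-d-\log\det D^2\Phi)g\,d\gamma\ge 0$. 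Your route, by contrast, pushes all the hard work into an approximation to the smooth case of Theorem~\ref{t2.2} followed by a weak-limit identification; this is in fact the same engine the paper uses to prove Theorem~\ref{t3.2}, so it is a legitimate alternative strategy, but it is considerably heavier here, since \cite{Kol2010} already packages the approximation.

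There is one concrete gap in your approximation scheme. You claim $\mathrm{I}_\gamma g_n\to\mathrm{I}_\gamma g$ via ``convexity of Fisher information under convolution,'' but that monotonicity is a fact about Lebesgue Fisher information of the full density $g\,e^{-|x|^2/2}$, not about the relative quantity $\mathrm{I}_\gamma g=\int|\nabla g|^2/g\,d\gamma$. Convolving $g$ with a compactly supported Lebesgue mollifier $\varrho_\varepsilon$ gives, by Cauchy--Schwarz, the pointwise bound $|\nabla(g*\varrho_\varepsilon)|^2/(g*\varrho_\varepsilon)\le\varrho_\varepsilon*(|\nabla g|^2/g)$; integrating this against $\gamma$ does not yield $\le\mathrm{I}_\gamma g$, because $\gamma$ is not translation invariant. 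The clean fix is the one the paper itself uses in the proof of Theorem~\ref{t3.2}: replace $\varrho_\varepsilon$-mollification by the Ornstein--Uhlenbeck semigroup $T_t$. Then $|\nabla T_tg|^2\le T_t(|\nabla g|^2/g)\,T_tg$ and $\int T_t(|\nabla g|^2/g)\,d\gamma=\int|\nabla g|^2/g\,d\gamma$, giving $\mathrm{I}_\gamma(T_tg)\le\mathrm{I}_\gamma g$, with the required convergence as $t\to 0$. With that replacement (and with the two-sided truncation handled as in the paper's proof of Theorem~\ref{t3.2}, which also keeps track of relative entropy so that Proposition~\ref{p2.4} yields $\nabla\Phi_n\to\nabla\Phi$ in $L^2$), your argument goes through; note also that the absolute-continuity hypothesis of Proposition~\ref{p2.4} is exactly why the truncation should raise the floor of $g$ rather than set it to zero.
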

\begin{proof}
It follows from the previous remark that
$D^2\Phi $ exists $g \cdot \gamma$-a.e.,
in particular, $D^2_a \Phi =  D^2\Phi $ \ $g \cdot \gamma$-a.e.,
 and
$$
 \int \|D^2 \Phi \|^2_{\mathcal{HS}} \ g  d\gamma < \infty,
 $$
moreover,
$$
\int \Bigr| \frac{\nabla g(x)}{g(x)} - x \Bigr|^2 g(x) \  \gamma(dx)
\ge
 \int \|D^2 \Phi \|^2_{\mathcal{HS}} \ g d\gamma.
$$
Note that
$$
\int \Bigr| \frac{\nabla g(x)}{g(x)} - x \Bigr|^2  g(x) \  \gamma(dx)
=
\int \frac{|\nabla g|^2}{g} \ d\gamma
- 2 \int \langle \nabla g(x), x\rangle \  \gamma(dx)+ \int |x|^2  g(x)\  \gamma(dx)
$$
and
$$
 \int \|D^2 \Phi \|^2_{\mathcal{HS}} \  g  d\gamma
 =
 \int \|D^2 \Phi - \mbox{I} \|^2_{\mathcal{HS}} \  g  d\gamma
 + 2 \int \Delta \Phi\ g  d\gamma -d,
$$
where second order derivatives of $\Phi$ are meant in the sense of the weighted class
$G^{1,2}(g\cdot\gamma)$.
The integration by parts formula yields that
$$
- 2 \int \langle \nabla g(x), x\rangle  \  \gamma(dx)
+ \int |x|^2  g(x)\  \gamma(dx)
=
2d - \int |x|^2 g(x) \ \gamma(dx).
$$
By the change of variables formula we have
$$
2 \int \Delta \Phi \  g  d\gamma -d =
2 \int \Delta \Phi \ g  d\gamma - \int |\nabla \Phi|^2 \  g d\gamma .
$$
Therefore,
\begin{multline*}
\int \frac{|\nabla g|^2}{g} \ d\gamma
\ge
\\
\ge
 \int \|D^2 \Phi -\mbox{I} \|^2_{\mathcal{HS}} \ g  d\gamma
+
 \int \bigl(|x|^2 - |\nabla \Phi(x)|^2\bigr)  g(x) \  \gamma(dx)
 + 2 \int (\Delta \Phi  - d) \ g d\gamma .
\end{multline*}
The change of variables formula yields that
$$
\log g(x) = \frac{|x|^2}{2} - \frac{|\nabla \Phi(x)|^2}{2} + \log \det D^2 \Phi(x).
$$
Applying this formula we complete the proof.
\end{proof}

\section{Higher smoothness of $\varphi$}

Let us establish a priori estimates of the third order derivative of the potential
function $\Phi$ whose gradient is the optimal transportation mapping, i.e. estimates
on second order derivatives of the optimal transportation mapping itself.
Estimates of this type
have been obtained in \cite{Kol2010} by using smooth approximations and regularity results
for the Monge--Amp{\`e}re equation.
Let $\gamma$ be the standard Gaussian measure on~$\mathbb{R}^d$.
Suppose that $g\cdot\gamma$ is a probability measure such that
$$
\sqrt{g} \in W^{2,1}(\gamma).
$$

\begin{theorem}\label{t3.1}
{\rm(\cite{Kol2010})}
Whenever $p \in [1,\infty)$ and $1 \le i \le d$,
for the optimal transportation $\nabla \Phi$ of $g\cdot\gamma$ to~$\gamma$ one has
$$
 \int |\Phi_{x_i x_i}|^{2p}  \ g d\gamma \le
 \Bigl(\frac{p+1}{2}\Bigr)^p \int |x_i + g_{x_i}/g|^{2p}   \ g  d\gamma,
$$
provided the integral on the right is finite.
If $g>0$ and $v:=-\log g$ is twice continuously differentiable, then
\begin{equation}\label{ek3.1}
\int \| D^2 \Phi\|^{2p}  \ g  d\gamma \le
\int [M( \mbox{\rm{I}} + D^2 v)]^{p}   \ g   d\gamma,
\end{equation}
provided the integral on the right is finite.
In the case $p=\infty$ one has the following Caffarelli-type estimate{\rm:}
\begin{equation}\label{ek3.2}
 \| D^2 \Phi(x)\|^{2}\le \sup_x M( \mbox{\rm{I}} + D^2 v(x))
\quad \hbox{$g\cdot \gamma$-a.e.}
\end{equation}
\end{theorem}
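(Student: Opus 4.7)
The plan is to establish the bounds first for smooth, strictly positive densities $g$ where, by Caffarelli's regularity theory, the potential $\Phi$ is classically $C^\infty$, and then pass to the limit through a mollified approximation $g_n$ that preserves $\mathrm{I}_\gamma g_n \to \mathrm{I}_\gamma g$ and has $x_i + g_{n,x_i}/g_n$ converging in $L^{2p}(g_n\cdot\gamma)$. Lower semi-continuity of the left-hand side, together with the $L^2$-convergence of $D^2\Phi_n$ guaranteed by Proposition~\ref{p2.10}, yields the bound in the limit.

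In the smooth setting the starting point is the differentiated Monge--Amp\`ere equation, obtained by differentiating $\log g = \tfrac{1}{2}|x|^2 - \tfrac{1}{2}|\nabla\Phi|^2 + \log\det D^2\Phi + C$ in $x_i$, namely
\begin{equation*}
\mathrm{Tr}\bigl((D^2\Phi)^{-1} D^2 \Phi_{x_i}\bigr) - \langle \nabla\Phi, \nabla \Phi_{x_i}\rangle = \frac{g_{x_i}}{g} - x_i.
\end{equation*}
I would multiply this identity by $\Phi_{x_i x_i}^{2p-1}$, which is nonnegative by convexity of $\Phi$, and integrate against $g\,d\gamma$. Gaussian integration by parts applied to the trace term produces a $(2p-2)$-power of $\Phi_{x_i x_i}$ times the nonnegative quadratic form $\bigl((D^2\Phi)^{-1}\nabla\Phi_{x_i},\nabla\Phi_{x_i}\bigr)$, which can be discarded; a second integration by parts, using $\partial_{x_i}(|\nabla\Phi|^2/2)=\langle\nabla\Phi,\nabla\Phi_{x_i}\rangle$, converts the drift term into multiples of $\int \Phi_{x_i x_i}^{2p}\, g\,d\gamma$ and of the cross term $\int \Phi_{x_i}\Phi_{x_i x_i}^{2p-1}(x_i+g_{x_i}/g)\, g\,d\gamma$. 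Young's inequality with optimal weight, followed by H\"older to eliminate the remaining $\Phi_{x_i x_i}^{2p-2}$ factor, produces the stated bound with constant $((p+1)/2)^p$.

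For the operator-norm inequality the same computation, carried out along an arbitrary unit vector $h \in \mathbb{R}^d$ (replacing $\partial_{x_i}$ by the directional derivative $\partial_h$), bounds the scalar $(D^2\Phi\,h,h)^{2p}$ by the quadratic form of $I + D^2v$ along $h$ after integration. Taking the supremum over $|h| \le 1$ gives $\|D^2\Phi\|^{2p}$ on the left and $M(I+D^2v)^p$ on the right. The Caffarelli-type bound (\ref{ek3.2}) is recovered from (\ref{ek3.1}) by raising to the power $1/p$ and sending $p\to\infty$, since $L^{2p}$-norms converge to the $L^\infty$-norm.

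The main obstacle is rigorously justifying the differentiated Monge--Amp\`ere identity, which requires three classical derivatives of $\Phi$ that are unavailable under only $\sqrt g \in W^{2,1}(\gamma)$. The approximation scheme must be chosen with care so that all integrations by parts are valid for the smooth approximants $g_n$ and both sides of the bound converge; the incremental-quotient technique indicated in the remark preceding Proposition~\ref{p2.10} provides the needed a priori compactness, while the hypothesis $x_i + g_{x_i}/g \in L^{2p}(g\cdot\gamma)$ ensures convergence of the right-hand side along the approximation.
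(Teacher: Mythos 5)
The statement in question is cited as Theorem~3.1 \emph{from} \cite{Kol2010}; the present paper does not reproduce its proof, only the remark before Proposition~\ref{p2.10} sketching the incremental-quotient approach for the $L^2$ case. So there is no proof in the paper to compare against directly; what follows is an assessment of the internal correctness of your outline.

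There is a genuine gap in the central computation. You propose to test the \emph{once}-differentiated Monge--Amp\`ere identity
$$\mathrm{Tr}\bigl((D^2\Phi)^{-1}D^2\Phi_{x_i}\bigr)-\langle\nabla\Phi,\nabla\Phi_{x_i}\rangle=-V_{x_i}$$
against $\Phi_{x_ix_i}^{2p-1}\,g\,d\gamma$, and assert that integration by parts of the trace term yields $(2p-1)\Phi_{x_ix_i}^{2p-2}$ times the nonnegative form $\bigl((D^2\Phi)^{-1}\nabla\Phi_{x_i},\nabla\Phi_{x_i}\bigr)$. That is not what the integration by parts produces: the derivative of the test function contributes $\nabla\bigl(\Phi_{x_ix_i}^{2p-1}\bigr)=(2p-1)\Phi_{x_ix_i}^{2p-2}\nabla\Phi_{x_ix_i}$, so the pairing is $\bigl((D^2\Phi)^{-1}\nabla\Phi_{x_i},\nabla\Phi_{x_ix_i}\bigr)$, and since $\nabla\Phi_{x_i}=D^2\Phi\,e_i$ this equals the third derivative $\Phi_{x_ix_ix_i}$, which is not sign-definite and cannot be discarded. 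Worse, if one carries the integration by parts through to the end (including the divergence of $(D^2\Phi)^{-1}$, which by the differentiated equation regenerates the trace term, and the term from $\nabla V$), the once-differentiated identity tested against $\Phi_{x_ix_i}^{2p-1}$ collapses to the tautology $0=0$; it contains no new information. The estimate with the sharp constant $\bigl(\tfrac{p+1}{2}\bigr)^p$ comes from a different ledger: one differentiates the Monge--Amp\`ere equation \emph{twice} in $x_i$, tests the resulting identity against $\Phi_{x_ix_i}^{2p-2}$ so that $L_\Phi\Phi_{x_ix_i}\cdot\Phi_{x_ix_i}^{2p-2}$ produces a genuinely nonnegative Dirichlet-form term $(2p-2)\int\Phi_{x_ix_i}^{2p-3}\bigl((D^2\Phi)^{-1}\nabla\Phi_{x_ix_i},\nabla\Phi_{x_ix_i}\bigr)\,d\mu$, then integrates by parts once in $x_i$ to replace $\int V_{x_ix_i}\Phi_{x_ix_i}^{2p-2}\,d\mu$ by $\int V_{x_i}^2\Phi_{x_ix_i}^{2p-2}\,d\mu$ plus a third-derivative cross term, and absorbs that cross term into the Dirichlet form by the Cauchy inequality (this is exactly where $\tfrac{p+1}{2}=\tfrac{2p-2}{4}+1$ appears); H\"older then closes the estimate. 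Your outline has the right overall flavor (test against a power of $\Phi_{x_ix_i}$, H\"older to close) but misidentifies the equation, the power of the test function, and the term that must be retained for the cancellation.

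Two further points. For inequality (\ref{ek3.1}) you write that ``taking the supremum over $|h|\le1$ gives $\|D^2\Phi\|^{2p}$ on the left.'' This interchange is not justified: for each fixed $h$ you get $\int(\partial_h^2\Phi)^{2p}\,d\mu\le\int M(I+D^2v)^p\,d\mu$, but $\sup_h\int(\partial_h^2\Phi)^{2p}\,d\mu$ does not bound $\int\sup_h(\partial_h^2\Phi)^{2p}\,d\mu=\int\|D^2\Phi\|^{2p}\,d\mu$; obtaining the operator norm inside the integral is precisely the point where the maximum-principle structure of Caffarelli's argument and the regularity theory for the Monge--Amp\`ere equation (which the paper explicitly invokes as the tool in \cite{Kol2010}) enter, and this needs to be argued, not inferred from the scalar estimates. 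Finally, the passage from (\ref{ek3.1}) to (\ref{ek3.2}) by letting $p\to\infty$ is correct, and the general strategy of proving everything first for smooth positive $g$ and then approximating is the right one, though the approximation must be arranged so that both sides converge, which requires the compactness input from Proposition~\ref{p2.4}/Proposition~\ref{p2.10} rather than generic mollification.
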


Let us turn to third order derivatives.
When dealing with $g\in W^{1,2}(\gamma)$, for the functions
$$v:=-\log g
$$
 defined almost
everywhere with
respect to the measure $g\cdot\gamma$,
we set
$$
v_{x_i x_j}:= - g^{-1} g_{x_i x_j}  + g^{-2}g_{x_i}g_{x_j} ,
$$
which is defined $g\cdot\gamma$-a.e. and coincides with the result
of formal differentiation of $g_{x_i}$ with respect to~$x_j$
on the set $\{g>0\}$. One can find a version
of $g$ that possesses the partial derivatives $g_{x_i x_j}$ almost everywhere
with respect to $g\cdot\gamma$, then $v_{x_i x_j}$ can be calculated
pointwise $g\cdot\gamma$-a.e.
Let us observe that if we also have $\sqrt{g} \in W^{2,1}(\gamma)$,
then the function $v_{x_i x_j}$ defined above belongs to $L^1(g\cdot\gamma)$ and serves
as the generalized derivative of the function
$v_{x_i}:=-g_{x_i}/g\in L^2(g\cdot\gamma)$, which is verified directly by
the integration by parts formula for $\gamma$, since $g_{x_i}/g \cdot g d\gamma =g_{x_i}\cdot d\gamma$.
Therefore, the assumptions that $\sqrt{g} \in W^{2,1}(\gamma)$ and
$g\in W^{1,2}(\gamma)$ used in the next theorem yield that
$v=-\log g\in W^{2,1}(g\cdot\gamma)\cap W^{1,2}(g\cdot\gamma)$, which looks more intrinsic,
but is less convenient technically.

\begin{theorem}\label{t3.2}
Let $\sqrt{g} \in W^{2,1}(\gamma)$, $g\in W^{1,2}(\gamma)$,
$g>0$ a.e. and
$$
|\nabla g/g| \in L^{2p/(2-p)}(g \cdot\gamma)
\quad \hbox{for some $p\in [1,2)$.}
$$
Assume, in addition, that
 $$
  M(I + D^2 v)\in L^{p/(2-p)}(g \cdot \gamma).
 $$
Then $\Phi$ has Sobolev derivatives  up to the third order
with respect to~$g\cdot\gamma$ and
\begin{equation}\label{ek3.3}
\int \Bigl(\sum_{i=1}^d \| (D^2 \Phi)_{x_i} \|_{\mathcal{HS}}^2\Bigr)^{p/2} \ g  d\gamma
\le
  \biggl(\int \bigl[M({\rm{I}} + D^2 v)\bigr]^{p/(2-p)}  \ g  d\gamma \biggr)^{(2-p)/2}
\cdot ({\rm I}_{\gamma} g)^{p/2}.
\end{equation}
\end{theorem}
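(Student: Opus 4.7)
\textbf{Proof plan for Theorem \ref{t3.2}.} The strategy is to combine the third order information carried by the identity (\ref{ek2.2}) of Theorem \ref{t2.2} with the $L^{p/(2-p)}$-bound on $\|D^2\Phi\|$ coming from Theorem \ref{t3.1} by means of a single Hölder interpolation. The algebraic ingredient that fuses them is the pointwise inequality
$$
\|B\|_{\mathcal{HS}}^2 \le \|A\|^2 \cdot \mbox{Tr}\bigl[A^{-1} B\bigr]^2,
$$
valid for every positive symmetric $A$ and every symmetric $B$. This follows from the factorization $B=A^{1/2}(A^{-1/2}BA^{-1/2})A^{1/2}$ together with the identification $\mbox{Tr}[A^{-1}B]^2=\|A^{-1/2}BA^{-1/2}\|_{\mathcal{HS}}^2$ and the standard estimate $\|XCX\|_{\mathcal{HS}}\le \|X\|^2\|C\|_{\mathcal{HS}}$. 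Applied pointwise with $A=D^2\Phi$ (positive symmetric wherever it exists) and $B=(D^2\Phi)_{x_i}$ (symmetric), and summed in $i$, it yields
$$
\sum_{i=1}^d \|(D^2\Phi)_{x_i}\|_{\mathcal{HS}}^2
\le \|D^2\Phi\|^2 \cdot \sum_{i=1}^d \mbox{Tr}\bigl[(D^2\Phi)^{-1}(D^2\Phi)_{x_i}\bigr]^2.
$$

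Raising to the power $p/2$, integrating against $g\cdot\gamma$, and applying Hölder's inequality with the conjugate pair $2/(2-p)$ and $2/p$ (both finite since $p\in [1,2)$), I would obtain
$$
\int \Bigl(\sum_{i=1}^d \|(D^2\Phi)_{x_i}\|_{\mathcal{HS}}^2\Bigr)^{p/2} g\, d\gamma
\le \biggl(\int \|D^2\Phi\|^{2p/(2-p)} g\, d\gamma\biggr)^{(2-p)/2}\cdot\biggl(\sum_{i=1}^d \int \mbox{Tr}\bigl[(D^2\Phi)^{-1}(D^2\Phi)_{x_i}\bigr]^2 g\, d\gamma\biggr)^{p/2}.
$$
The first factor is controlled by Theorem \ref{t3.1} (inequality (\ref{ek3.1})) applied with parameter $p/(2-p)\in [1,\infty)$, giving $\int [M(\mbox{I}+D^2 v)]^{p/(2-p)} g\, d\gamma$, which is finite by hypothesis. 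The second factor is bounded by $\mbox{I}_{\gamma} g$ thanks to the last term of the identity (\ref{ek2.2}), as already isolated in Remark \ref{rem2.3}. Multiplying the two factors gives exactly (\ref{ek3.3}).

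The main obstacle is that Theorem \ref{t2.2} is stated under the smoothness assumptions that $g\in C^2$ and $\Phi\in C^4$, neither of which is available from $\sqrt{g}\in W^{2,1}(\gamma)$ and $g\in W^{1,2}(\gamma)$ alone, and similarly Theorem \ref{t3.1} is originally proved under $v=-\log g\in C^2$. I would therefore approximate $g$ by smooth strictly positive densities $g_n$ (for instance by convolving $\sqrt{g}$ with Gaussian mollifiers, renormalizing to probability densities, and truncating at infinity) in such a way that $\mbox{I}_{\gamma} g_n \to \mbox{I}_{\gamma} g$ and $\int [M(\mbox{I}+D^2 v_n)]^{p/(2-p)} g_n\, d\gamma$ remains uniformly bounded by the analogous quantity for $g$. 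For each $g_n$ the smooth version of the argument above delivers (\ref{ek3.3}) with a uniformly controlled right-hand side, and the delicate point is to argue that $D^2\Phi_n\to D^2\Phi$ and $(D^2\Phi_n)_{x_i}\to (D^2\Phi)_{x_i}$ almost everywhere with respect to $g\cdot \gamma$, after which Fatou's lemma on the left and convergence of the right-hand side produce the bound in the limit. The existence of $(D^2\Phi)_{x_i}$ as a Sobolev derivative in $G^{2,1}(g\cdot\gamma)$ then follows by passing to the limit in the defining integration by parts identity (\ref{ek2.1}) against smooth compactly supported test functions, exploiting the uniform integrability just established.
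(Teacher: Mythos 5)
The algebraic core of your proposal is exactly the paper's: the pointwise operator inequality $\|(D^2\Phi)_{x_i}\|_{\mathcal{HS}}^2 \le \|D^2\Phi\|^2\,\mbox{Tr}\bigl[(D^2\Phi)^{-1}(D^2\Phi)_{x_i}\bigr]^2$ (which the paper writes as $\mbox{Tr}[(D^2\Phi)^{-1}(D^2\Phi)_{x_i}]^2 \ge \|(D^2\Phi)_{x_i}\|^2_{\mathcal{HS}}/\|D^2\Phi\|^2$), followed by H\"older with exponents $2/(2-p)$ and $2/p$, with the two factors controlled by (\ref{ek3.1}) and by Remark~\ref{rem2.3}. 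So far, this is correct and essentially identical to the paper.

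The gap is in your approximation scheme, and it is not a minor one. You propose to mollify $g$, obtain almost-everywhere convergence $D^2\Phi_n\to D^2\Phi$ \emph{and} $(D^2\Phi_n)_{x_i}\to(D^2\Phi)_{x_i}$ with respect to $g\cdot\gamma$, and conclude via Fatou. But you have no mechanism to produce a.e.\ convergence of \emph{third} derivatives of $\Phi_n$ -- the only uniform information available is an $L^p$ bound on $(D^2\Phi_n)_{x_i}$, which cannot give pointwise convergence. The paper sidesteps this entirely: it passes to a subsequence for which $g_{1/n}^{1/p}\partial_{x_i}\partial_h^2\Phi_n$ converges \emph{weakly} in $L^p(\gamma)$, identifies the weak limit with $g^{1/p}\partial_{x_i}\partial_h^2\Phi$ via the integration-by-parts identity (\ref{ek3.9}) tested against $C_0^\infty$, and then invokes lower semicontinuity of the $L^p$ norm under weak convergence to get (\ref{ek3.8}). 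This weak-convergence/duality route is what actually works; a.e.\ convergence plus Fatou is not available here.

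Two further points where your approximation plan would run into trouble. First, generic Gaussian mollification of $\sqrt g$ does not preserve the hypothesis $M({\rm I}+D^2 v)\in L^{p/(2-p)}(g\cdot\gamma)$; the paper instead uses the Ornstein--Uhlenbeck semigroup $T_t$ precisely because it satisfies the pointwise domination (\ref{ek3.6}), $[M({\rm I}+D^2 v_t)]^r e^{-v_t}\le e^{-2rt}T_t([M({\rm I}+D^2 v)]^r e^{-v})$, which yields uniform integrability and convergence of the right-hand side along the approximating sequence. Second, the approximation cannot be done in one pass: the quantities involve $\log g$, so the paper carries out a three-stage reduction (first $0<c\le g\le C$ using $T_t$, then remove the upper bound via convex truncations $f_n(v)$, then remove the lower bound via concave truncations), and the last stage needs a separate argument because $g$ can vanish and the term $g^{1/p-1}\partial_{x_i}g\,\partial_h^2\Phi$ in (\ref{ek3.9}) becomes delicate. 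You should build these ingredients into your plan; as written, the limit passage is not justified.
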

\begin{proof}
We apply the reasoning that is standard in such estimates
and will be also employed in the proof of Theorem \ref{t4.6} below.
At the first step we assume that  $v$ and $\Phi$
are smooth. Applying Theorem \ref{t2.2}, we obtain
$$
\mbox{\rm I}_{\gamma} g \ge
\sum_{k=1}^{d} \int \mbox{\rm Tr}
\bigl[ (D^2 \Phi)^{-1} (D^2 \Phi)_{x_k} \bigr]^2 \ g  d\gamma.
$$
Next, using the relations (valid for positive  operators)
$$
\mbox{Tr} \bigl[ (D^2 \Phi)^{-1} (D^2 \Phi)_{x_i}\bigr]^2 =
\bigl\| (D^2 \Phi)^{-1/2} (D^2 \Phi)_{x_i} (D^2 \Phi)^{-1/2} \bigr\|^2_{\mathcal{HS}}
\ge \frac{\|(D^2 \Phi)_{x_i}\|^2_{\mathcal{HS}}}{\| D^2 \Phi\|^2}
$$
along with Theorem \ref{t3.1}, Remark \ref{rem2.3} and  H{\"o}lder's inequality, we obtain
\begin{align*}
\int \Big( & \sum_{i=1}^{d} \| (D^2 \Phi)_{x_i} \|^2_{\mathcal{HS}} \Bigr)^{p/2} \ g d\gamma\le
\\& \le
\int \| D^2 \Phi\|^p \Big(  \sum_{i=1}^{d}
\mbox{\rm{Tr}} \bigl[ (D^2 \Phi)^{-1} (D^2 \Phi)_{x_i}\bigr]^2
 \Bigr)^{p/2}  \ g d\gamma \le
\\&  \le
\biggl( \int \|D^2 \Phi\|^{2p/(2-p)}  \ g  d\gamma \biggr)^{(2-p)/2}
\cdot ({\rm I}_{\gamma} g)^{p/2}\le
\\& \le
  \biggl( \int \bigl[M({\rm{I}} + D^2 v)\bigr]^{p/(2-p)}  \ g  d\gamma \biggr)^{(2-p)/2}
\cdot ({\rm I}_{\gamma} g)^{p/2}.
\end{align*}

Let $g$ satisfy the assumptions of the theorem and, in addition, 
$0<c\le g\le C$
with some constants $c$ and $C$.
Then $v\in W^{2,1}(\gamma)$.

Let $g_t= T_t g$, where $\{T_t\}$ is the Ornstein--Uhlenbeck semigroup, i.e.
$$
g_t(x)=T_t g(x) = \int g(e^{-t} x + \sqrt{1-e^{-2t}}\, y ) \  \gamma(dy).
$$
Then $g_t(x)$ is infinitely differentiable in $x$ for every $t>0$ and $c\le g_t\le C$. Let $v_t=-\log g_t$.
For every vector $h$ one has
$$
\partial_{h} v_t =  e^{-t} \frac{T_t(e^{-v} \partial_h v)}{T_t e^{-v}},
$$
\begin{equation}\label{ek3.4}
\partial^2_{h} v_t =  e^{-2t}\Bigl[ \frac{T_t(e^{-v} \partial^2_h v)}{T_t e^{-v}}
- \frac{T_t( |\partial_h v|^2 e^{-v})}{T_t e^{-v}} \
+ \frac{|T_t(e^{-v} \partial_h v)|^2}{(T_t e^{-v})^2} \Bigr] .
\end{equation}
Applying the inequality
\begin{equation}\label{ek3.5}
|T_t(uw)|^r \le T_t |u|^r \ \bigl|T_t |w|^{r/(r-1)}\bigr|^{r-1},
\end{equation}
we observe that
$$
|T_t(e^{-v} \partial_h v)|^2\le T_t(|\partial_h v|^2 e^{-v}) \ T_t e^{-v},
$$
hence
$$
\partial^2_{h} v_t\le e^{-2t} \frac{T_t(e^{-v} \partial^2_h v)}{T_t e^{-v}}.
$$
Letting $w_{t,h}=\max(0,1+\partial^2_{h} v_t)$,
$u_h=\max(0,1+\partial^2_{h} v)$, we obtain that
$$
w_{t,h}\le e^{-2t} \frac{T_t (u_h e^{-v})}{T_t e^{-v}},
$$
whence by (\ref{ek3.5}) with $u=u_{h}$ and $w=e^{-(1-1/r)v}$ we find that
$$
|w_{t,h}|^r \le e^{-2tr} \frac{T_t (|u_h|^r e^{-v})}{T_t e^{-v}},
$$
Therefore,
\begin{equation}\label{ek3.6}
\bigl[M(I + D^2 v_t)\bigr]^r e^{-v_t} \le e^{-2rt} T_t \bigl( \bigl[M(I + D^2 v)\bigr]^r  e^{-v}\bigr).
\end{equation}
It is known (see, e.g.,  \cite[Example 8.4.3]{Bdifm}) that for every
function $\psi\in L^1(\gamma)$, as $t\to 0$,
 one has $T_t\psi\to \psi$ in $L^1(\gamma)$ and almost everywhere.
It follows from this and (\ref{ek3.4}) that
 as $n\to \infty$, we have $g_{1/n}\to g$
and $D^2 v_{1/n}\to D^2v$ almost everywhere.
Estimate (\ref{ek3.6}) shows that the sequence
$\bigl[M({\rm I} + D^2 v_{1/n})\bigr]^r e^{-v_{1/n}}$ is uniformly integrable with respect to $\gamma$
once $\bigl[M({\rm I} + D^2 v)\bigr]^r  e^{-v}$ is $\gamma$-integrable. Therefore,
\begin{equation}\label{ek3.7}
\lim\limits_{n\to\infty}
\int \bigl[M({\rm I} + D^2 v_{1/n})\bigr]^r e^{-v_{1/n}}\ d\gamma
=\int \bigl[M({\rm I} + D^2 v)\bigr]^r  e^{-v} \ d\gamma.
\end{equation}
Note also that $|\nabla T_tg |^2=|T_t \nabla g|^2\le T_t (|\nabla g|^2/g) \ T_t g$,
whence we have
$$
\int \frac{|\nabla g_t|^2}{g_t}\ d\gamma \le \int \frac{|\nabla g|^2}{g}\ d\gamma .
$$
Since $g$ and $g_{1/n}$ are between $c$ and $C$, we have
${\rm Ent}_{g_{1/n}\cdot\gamma}(g/g_{1/n})\to 0$.
Proposition~\ref{p2.4} shows that for the optimal
transports $\nabla \Phi_n$ of $g_{1/n}\cdot\gamma$ to $\gamma$ we have
$|\nabla \Phi_n -\nabla\Phi|\to 0$ in $L^2(g\cdot\gamma)$.

Since for the functions $g_{1/n}$ estimate (\ref{ek3.3}) is true, taking into account (\ref{ek3.7}),
it suffices to verify that, for every fixed vector $h$ one has
\begin{equation}\label{ek3.8}
\int | \partial_{x_i} \partial_h^2\Phi|^{p} \ g d\gamma
\le \liminf_n \int | \partial_{x_i} \partial_h^2\Phi_n|^{p} \ g_{1/n}  d\gamma .
\end{equation}
Since the right side is finite, we may pass to a subsequence (denoted by the same indices)
such that the sequence of functions $g_{1/n}^{1/p} \partial_{x_i} \partial_h^2\Phi_n$
converges weakly in $L^p(\gamma)$ to some function~$w$. If we show
that $w=g^{1/p} \partial_{x_i} \partial_h^2\Phi$, the remaining estimate will be established.
It suffices to show that, for every $\xi\in C_0^\infty$,
 the integrals of $\xi w$ and $\xi g^{1/p} \partial_{x_i} \partial_h^2\Phi$
 with respect to $\gamma$ coincide.
We have
\begin{equation}\label{ek3.9}
\int \xi g^{1/p} \partial_{x_i} \partial_h^2\Phi \ d\gamma
=- \int \partial_{x_i}(\xi g^{1/p}) \partial_h^2\Phi \ d\gamma+
\int x_i \xi g^{1/p} \partial_h^2\Phi \ d\gamma .
\end{equation}
An analogous equality holds for $g_{1/n}$ in place of $g$.
Since $g\ge c>0$ and $\xi$ has compact support, it suffices to show that, for every
function $\eta\in C_0^\infty$, the functions $\eta \partial_h^2\Phi_n$ converge
to $\eta \partial_h^2\Phi$ weakly in $L^2(\gamma)$.
Due to (\ref{ek3.1}) it suffices to show that, for every $\xi\in C_0^\infty$,
 the integrals of $\xi \eta \partial_h^2\Phi_n$ against $\gamma$
converge to the integral of $\xi \eta \partial_h^2\Phi$. As above,
by the integration by parts formula this reduces to convergence of integrals with
$\partial_h\Phi_n$, which takes place, since $\partial_h\Phi_n\to \partial_h\Phi$
in $L^2(g\cdot\gamma)$. It should be noted that in the present case where $g\ge c>0$ we have even
convergence of $\partial_h^2\Phi_n$ to $\partial_h^2\Phi$ in $L^p(U)$ on every ball~$U$,
since the functions $|\nabla \partial_h^2\Phi_n|$ are uniformly bounded in~$L^p(U)$,
so the compact embedding of Sobolev spaces works.

Let us remove the assumption of two-sided boundedness of $v$. Suppose first that
$g$ is bounded from below, i.e., $g\ge c>0$.
Let us take a sequence of smooth convex functions $f_n$ on the real line such that
$f_n(s)=s$ if $s\ge -n$, $f_n(s)=-n-1$ if $s\le -n-1$, $0\le f_n'\le N$, $0\le f_n''\le N$,
where $N$ does not depend on $n$. Let us consider probability densities
$g_n=c_n e^{-v_n}$ with respect to $\gamma$,
where $v_n=f_n(v)$ and $c_n$ is a normalization constant.
Let $\nabla\Phi_n$ be the corresponding optimal transports of $g_n\cdot\gamma$ to $\gamma$.
As above, we have $\nabla\Phi_n\to \nabla\Phi$ in $L^2(g\cdot\gamma)$.
An analog of (\ref{ek3.8}) in this situation is similarly justified.
What we need is an analog of (\ref{ek3.7}). We have
$$
\partial_h v_n=f_n'(v)\partial_h v,
\quad
\partial_h^2 v_n=f_n'(v)\partial_h^2 v+f_n''(v)|\partial_h v|^2.
$$
Hence $M({\rm I}+D^2v_n)$ coincides with $M({\rm I}+D^2v)$ if $v>-n$, vanishes if $v<-n-1$
and is estimated by $N \cdot M({\rm I}+D^2v)+N |\partial_h v|^2$ if $-n-1\le v\le n$.
It remains to observe that the integral of $|\nabla v|^{2p/(2-p)}I_{-n-1\le v\le -n}g$ with respect
to $\gamma$ tends to zero as $n\to \infty$, since $|v|\in L^{2p/(2-p)}(g\cdot\gamma)$.
Thus, (\ref{ek3.7}) holds also in this case.

Finally, we reduce the general case to the considered case with bounded $v$.
We consider similar approximations $f_n(v)$, this time with concave functions such that
$f_n(s)=s$ if $s\le n$, $f_n(s)=n+1$ if $s\ge n+1$, $0\le f_n'\le N$, $f_n''\le 0$.
Defining $g_n$ as above, we again have convergence $\nabla\Phi_n\to \nabla \Phi$ in $L^2(g\cdot\gamma)$.
In place of (\ref{ek3.7}) we have a simple estimate
$$
\limsup\limits_{n\to\infty}
\int \bigl[M({\rm I} + D^2 v_{n})\bigr]^r e^{-v_{n}}\ d\gamma
\le \int \bigl[M({\rm I} + D^2 v)\bigr]^r  e^{-v} \ d\gamma,
$$
because we now have $\partial_h^2 v_n\le \partial_h^2 v$, so $M({\rm I} + D^2 v_{n})\le M({\rm I} + D^2 v)$,
in addition, on the set $\{n\le v\le n+1\}$ we have
$M({\rm I} + D^2 v)^r g_n\le 3 M({\rm I} + D^2 v)^r g$, which yields the indicated estimate.
Note that here the definition of $D^2v$ given before the theorem is used.
However, now (\ref{ek3.8}) is not obvious and requires justification, since
$g$ is not strictly positive, which makes some problems in (\ref{ek3.9}).
Namely, the problematic term is
$w=g^{1/p-1}\partial_{x_i}g \partial_h^2\Phi=g^{1/p}\partial_{x_i}v \partial_h^2\Phi$.
Writing this term as $g^{(2-p)/(2p)}\partial_{x_i}v \partial_h^2\Phi g^{1/2}$
and noting that we have convergence of
$g_n^{(2-p)/(2p)}\partial_{x_i}v_n$ to
$g^{(2-p)/(2p)}\partial_{x_i}v$ in $L^{2p/(2-p)}(\gamma)$ (which is readily verified),
we see that it suffices to show that we have weak convergence of
$\partial_h^2\Phi_n g_n^{1/2}$ to $\partial_h^2\Phi g^{1/2}$ in $L^{2p/(3p-2)}(\gamma)$.
Since $2p/(3p-2)\le 2$, it suffices to prove that there is weak convergence in $L^2(\gamma)$.
Using (\ref{ek3.1}) with $p/(2-p)$ in place of $p$
we obtain a uniform bound on the integrals of $g_n|\partial_h^2\Phi_n|^2$ against $\gamma$.
Therefore, as above, it remains to show that, for every $\xi\in C_0^\infty$,
 the integrals of $\xi g_n^{1/2} \partial_h^2\Phi_n$ against $\gamma$
 converge to the integral of $\xi g^{1/2} \partial_h^2\Phi$.
 Integrating by parts once again we see that it remains to get convergence
 of the term with
 $g_n^{-1/2} \partial_h g_n \partial_h\Phi_n=g_n^{-1}\partial_h g_n \partial_h\Phi_n g_n^{1/2}$
 to the respective term without the index~$n$.
 This convergence holds indeed, since
$g_n^{-1/2} \partial_h g_n\to g^{-1/2} \partial_h g$ in $L^2(\gamma)$
and the mappings $\nabla \Phi_n$ converge to $\nabla \Phi$ uniformly
on compact sets, which follows from their convergence in measure and convexity
of~$\Phi_n$ (see \cite[Section~25]{R}).
\end{proof}

It is important that (\ref{ek3.3}) does not depend on dimension.

\begin{remark}\label{rem3.3}
{\rm
Let us comment on the hypotheses of Theorem~\ref{t3.2}.
The inclusion $\sqrt{g}\in W^{2,1}(\gamma)$ yields that $g\in W^{1,1}(\gamma)$
and $|\nabla g|^2/g\in L^1(\gamma)$ (equivalently, $|\nabla g/g|\in L^2(g\cdot \gamma)$),
and the converse is also true.
The inclusion $\log g\in W^{r,1}(g\cdot \gamma)$ is equivalent to $|\nabla g/g|\in L^r(g\cdot \gamma)$;
the membership of $\log g$ in $W^{1,2}(g\cdot \gamma)$ can be written as
$\|D^2 g\|_{\mathcal{H}\mathcal{S}}\in L^1(\gamma)$, which is the inclusion $g\in W^{1,2}(\gamma)$.
Note also that $2p/(2-p)\ge 2$.
Therefore, the first set of our assumptions can be written as
\begin{equation}\label{ek3.10}
g\in W^{1,2}(\gamma), \quad
|\nabla g/g|\in L^{2p/(2-p)}(g\cdot\gamma).
\end{equation}
Writing our assumptions in this form is useful for
the infinite-dimensional case, since, as we shall see, passing to finite-dimensional projects
preserves these conditions.
}\end{remark}

\section{Infinite dimensional case}

Concerning analysis on the Wiener space the reader is referred to
 \cite{Bo98}, \cite{Bdifm}, \cite{Shig}, and \cite{UZ}.
Below we consider the standard Gaussian product measure $\gamma = \prod_{i=1}^{\infty} \gamma_i$ on $\mathbb{R}^{\infty}$
 with the Cameron--Martin space $H = l^2$ equipped with its
 standard Hilbert norm $|x| = \Bigl(\sum_{i=1}^{\infty} x^2_i\Bigr)^{1/2}$, where each $\gamma_i$ is the standard
Gaussian measure on the real line.
Let $\{e_i\}$ be the standard orthonormal basis in~$l^2$.
It is well-known (see \cite{Bo98}) that any centered Gaussian on a separable Fr\'echet (or, more generally,
a centered Radon Gaussian measure on a locally convex space) is isomorphic to
the product measure  $\gamma = \prod_{i=1}^{\infty} \gamma_i$ by means of a measurable linear
mapping that is one-to-one on a Borel linear subspace of full measure and is an isometry of the Cameron--Martin
spaces. For this reason the results obtained below hold in a more general setting, in particular,
for any separable Fr\'echet spaces.
The Sobolev class $W^{2,1}(\gamma)$ is introduced as the completion of the class of smooth cylindrical
functions with respect to the Sobolev norm
$\| f\|_{L^2(\gamma)}+\bigl\| |\nabla f|\bigr\|_{L^2(\gamma)}$,
where $\nabla f$ denotes the gradient along $H$, i.e., $\langle \nabla (x),h\rangle_H=\partial_h f(x)$.
Then the elements $f$ of the completion also obtain gradients $\nabla f$ along $H$ as mappings
in $L^2(\gamma, H)$ (the space of measurable $H$-valued square-integrable mappings)
specified
by the integration by parts formula
$$
\int  f \partial_{e_i}\xi \ d\gamma =-\int \xi \langle \nabla f, e_i\rangle \ d\gamma
+\int x_i f\xi\ d\gamma
$$
for smooth cylindrical functions $\xi$. Other equivalent characterizations are known
(see \cite{Bo98}, \cite{Bdifm}, \cite{Shig}). For example, $W^{2,1}(\gamma)$ coincides with
the space of all functions $f\in L^2(\gamma)$ possessing Sobolev gradients $\nabla f\in L^2(\gamma, H)$ satisfying
the above identity.
Similarly the second Sobolev class $W^{1,2}(\gamma)$ is introduced by using
the Hilbert--Schmidt norm on the second derivative $D^2 f$ along~$H$ and the $L^1$-norm;
for a general $f$ in $W^{1,2}(\gamma)$ the operator-valued mapping $D^2f$
can be specified by its matrix elements $\langle D^2f(x)e_i,e_i\rangle _H$
again through the integration by parts formula; more
general classes $W^{p,r}(\gamma)$ are naturally defined.

 Let us consider a probability measure $g \cdot \gamma$ with
 $\sqrt{g}\in W^{2,1}(\gamma)$. Then
$$
 \mbox{\rm I}_{\gamma} g=\int \frac{|\nabla g|^2}{g} \ d\gamma  < \infty.
$$
By the log-Sobolev inequality
$$
0\le 2 \mbox{\rm{Ent}}_{\gamma} g \le
 \mbox{\rm I}_{\gamma} g  < \infty.
$$

Similarly to the finite-dimensional case and the case of $W^{2,1}(\gamma)$ explained above,
one introduces the differentiation of functions
in the Sobolev sense with respect to the measure
$g\cdot \gamma$ (see \cite{Bdifm} for more details).
Namely, if $f\in L^2(g\cdot \gamma)$, then its Sobolev partial derivative
$f_{x_i}$ with respect to the variable $x_i$ is a function
in $L^1(g\cdot \gamma)$  satisfying the equality
\begin{equation}
\label{ek4.1}
\int f_{x_i} \xi \ g d\gamma = - \int f \xi_{x_i} \ g d\gamma
- \int f \xi \ \frac{g_{x_i}}{g} \ g  d\gamma
+ \int x_i f \xi \ g  d\gamma
\end{equation}
for every smooth cylindrical function  of the form
 $\xi(x) = u(x_1, \ldots, x_n)$, where $u$ is a smooth compactly supported function.
 We observe that all these integrals exist, because $f, f_{x_i}, g_{x_i}/g\in L^2(g\cdot \gamma)$
and $x_i\in L^2(g\cdot \gamma)$; the latter follows by the logarithmic Sobolev
inequality with respect to~$\gamma$. Therefore, one obtains the class $W^{2,1}(g\cdot\gamma)$
of functions $f\in L^2(g\cdot\gamma)$ such that $|\nabla f|\in L^2(g\cdot\gamma)$, where
$\nabla f=(f_{x_1},f_{x_2},\ldots)$. Similarly $W^{p,1}(g\cdot\gamma)$ and $W^{p,2}(g\cdot\gamma)$
are defined.

Let $g_n = {\rm I\!E}^{n}_{\gamma} g $ be the conditional expectation of  $g$ with respect to
$\sigma$-algebra $\mathcal{F}_n$ generated by $x_1,\ldots,x_n$ and the measure~$\gamma$.
It has the following representation:
$$
{\rm I\!E}^{n}_{\gamma} g(x_1,\ldots,x_n)
  = \int g(x_1,\ldots,x_n,y_{n+1},\ldots)  \ \Bigl( \prod_{i=n+1}^{\infty} \gamma_i \Bigr)(dy_{n+1}\cdots).
$$
 It is well-known (and follows from Jenssen's inequality) that
$$
\mbox{\rm{Ent}}_{\gamma} g_n \le \mbox{\rm{Ent}}_{\gamma} g, \ \  \mbox{\rm I}_{\gamma} g_n \le \mbox{\rm I}_{\gamma} g,
$$
hence $\sqrt{g_n} \in W^{2,1}(\gamma)$.
Since  $g_n$ depends on  finitely many coordinates, the potential
 $\varphi_n$ of the corresponding optimal transportation  $T_n(x) = x + \nabla \varphi_n(x)$ of $g_n \cdot \gamma$
 to the measure $\gamma$ depends only on the first $n$ variables.

According to Proposition \ref{p2.4} with $f=g_n$ and $g=1$ one has
$$
\mbox{\rm{Ent}}_{\gamma} g_n \ge \frac{1}{2}
\int |\nabla \varphi_n|^2 \ g_n d\gamma = \frac{1}{2} \int |\nabla \varphi_n|^2 \ g d\gamma,
$$
and Proposition \ref{p2.10} yields that
$$
\mbox{\rm I}_{\gamma} g_n  \ge
 \int \|D^2 \varphi_n \|^2_{\mathcal{HS}} \  g_n d\gamma =
 \int \|D^2 \varphi_n \|^2_{\mathcal{HS}} \  g d\gamma.
$$

In general, the gradients
$\nabla \varphi_n$ and $\nabla \varphi$ cannot be understood in the sense of  (\ref{ek4.1}),
 because in the general case no inclusions
 $\varphi_n, \varphi \notin L^2(g \cdot \gamma)$  are given.
There is no problem with functions $\varphi_n$ of finitely many variables, since their
 gradients $\nabla\varphi_n$ can be defined in the Sobolev sense locally.
Difficulties arise when we deal with~$\varphi$.
There are essentially two ways of introducing
$\nabla \varphi$ pointwise $g\cdot \gamma$-a.e.
If $g\cdot\gamma$ is equivalent to~$\gamma$,
then one can use the fact that $\varphi$ is a $1$-convex function  (see \cite{FU1});
we recall that a $\gamma$-measurable
function $f$ is called $1$-convex along the Cameron--Martin space if the function
$$
h \mapsto F_x(h):=f(x+h) + \frac{1}{2} |h|_H^2
$$
is convex on $H$ regarded as a mapping with values in the space $L^0(\gamma)$
of measurable functions with its natural ordering. In other words,
given $h,k\in H$ and $\alpha\in [0,1]$, one has
$$
F_x(\alpha h +(1-\alpha)k )\le \alpha F_x(h)+(1-\alpha)F_x(k)
\quad  \hbox{for $\gamma$-a.e.~$x$,}
$$
where the corresponding measure zero set may depend on $h,k,\alpha$.
It is also possible to consider this mapping with values in the Hilbert space $L^2(\sigma)$
for the equivalent measure $\sigma=(f^2+1)^{-1}\cdot\gamma$.
One can show that for every fixed~$i$ there is a version of $f$ such that
the functions $t\mapsto f(x+te_i)+t^2/2$ are convex. Hence almost everywhere
there exists the partial derivative $f_{x_i}$. Then we define
$\nabla f(x)$ as $(\partial_{x_i} f(x))_{i=1}^\infty$ if this element belongs to~$l^2$.

We shall define  $\nabla \varphi$ for our potential
function $\varphi$ without referring to $1$-convexity, since
we do not assume the equivalence of measures.
We shall show in a different way that, for every fixed~$i$, the function $\varphi$ has a version that
has the partial derivative $\varphi_{x_i}$ \ $g\cdot\gamma$-a.e. and the vector
$\nabla \varphi(x)=(\varphi_{x_i}(x))_{i=1}^\infty$ is in $l^2$ \ $g\cdot\gamma$-a.e.;
this amounts to the previous approach in the case of equivalent measures
(the relation to the Sobolev sense definition
is explained below).

Nevertheless, the second derivative $D^2 \varphi$
will be defined in the  Sobolev sense,
because, as we shall see,  $\varphi_{x_i} \in L^2(g \cdot \gamma)$. More precisely,
the Sobolev derivative $\varphi_{x_i x_j}  \in L^1(g \cdot \gamma)$
of $\varphi_{x_j}$ will be defined by means of~(\ref{ek4.1}).

By the finite-dimensional results we have
$$
\sup_n \int \Bigl(|\nabla \varphi_n|^2
+ \| D^2 \varphi_n \|^2_{\mathcal{HS}}\Bigr) \ g  d\gamma  < \infty.
$$
Hence, passing to  a subsequence, one can assume that  the mappings
$\nabla \varphi_n$ and $D^2 \varphi_n$
converge weakly
in the Hilbert spaces $L^2(g \cdot \gamma, H)$ and $\mathcal{H}^2_g$ defined as follows:
the space $L^2(g \cdot \gamma, H)$ is the space of measurable
mappings  $u\colon\, \mathbb{R}^{\infty} \to l^2$ with $| u|\in L^2(g\cdot \gamma)$ and
$\mathcal{H}^2_g$ is the space of measurable mappings $A$ with values in the space of symmetric Hilbert--Schmidt
operators such that $\|A \|_{\mathcal{HS}} \in L^2(g\cdot \gamma)$.

The following important result is proved in \cite{FU1} (see, in particular, Section~4 there):

{\it
one has  $\varphi_n \to \varphi$ in  $L^1(g \cdot \gamma)$
and a sequence of certain convex combinations of $\nabla \varphi_n$ converges
in $L^2(g\cdot\gamma,H)$ to a mapping denoted by $\nabla\varphi$ and having the
property that ${\rm I}+\nabla\varphi$ is the
optimal transportation taking $g \cdot \gamma$ to $\gamma$.}

However, this definition of $\nabla\varphi$
is not in the Sobolev sense.

We are going to obtain $\nabla \varphi$ similarly
as a limit of a subsequence of $\nabla\varphi_n$.
Then we would like to identify $\varphi_{x_i}$ with pointwise partial derivatives
of suitable versions  by using the integration by parts formula.
This requires some precautions since we do not know that $\varphi \in L^2(g\cdot\gamma)$,
without which
we have no inclusions $x_i\varphi, \varphi g_{x_i}/g\in L^1(g\cdot\gamma)$
and cannot refer to (\ref{ek4.1}). However, the functions
$\varphi^N = \varphi \wedge N \vee (-N)$ are bounded and,
as we shall now see, $g\cdot\gamma$-a.e. possess
partial derivatives $\varphi^N_{x_i}$ such that $|\nabla \varphi^N(x)|\le |\nabla\varphi(x)|$
\ $g\cdot\gamma$-a.e., where $\nabla \varphi^N=(\varphi^N_{x_1},\varphi^N_{x_2},\ldots)$.
In addition,
$\nabla \varphi^N(x)=\nabla\varphi(x)$ \ $g\cdot\gamma$-a.e.
on the set $\{x\colon\, |\varphi(x)|<N\}$.

\begin{proposition}
There is a  subsequence $\{n_k\}$ such that $\{\nabla \varphi_{n_k}\}$
 converges weakly in the space $L^2(g \cdot \gamma, H)$
 to some mapping denoted by $\nabla \varphi$.
 Moreover, for every $i$ there is a version of $\varphi$ denoted by the same symbol
 and possessing $g\cdot\gamma$-a.e. the partial derivative
$\varphi_{x_i}$ that coincides $g\cdot\gamma$-a.e. with $\langle \nabla\varphi, e_i\rangle_H$.

In addition, there exists $D^2 \varphi$ understood in the sense of
{\rm(\ref{ek4.1})}
and $D^2 \varphi_{n_k} \to D^2 \varphi$ weakly in $\mathcal{H}^2_g$.
In particular,
$$
\int \|D^2 \varphi \|_{\mathcal{HS}}^2 \ g  d\gamma < \infty.
$$
\end{proposition}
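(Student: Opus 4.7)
The plan is to extract weak subsequential limits from the dimension--free finite-dimensional estimates, identify those limits with the Fu--\"{U}st\"{u}nel construction, and then upgrade weak convergence to the pointwise and Sobolev-sense assertions through bounded truncations of $\varphi$ combined with the convex structure of $\Phi_n=\varphi_n+|x|^2/2$.

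\emph{Step 1 (weak compactness).} By contractivity of conditional expectation for Fisher information and entropy, $\mathrm{I}_\gamma g_n\le \mathrm{I}_\gamma g$ and $\mathrm{Ent}_\gamma g_n\le \mathrm{Ent}_\gamma g$. Applying Proposition \ref{p2.10} and Talagrand's inequality to $g_n=\mathrm{I\!E}^n_\gamma g$ yields
$$
\sup_n \int |\nabla\varphi_n|^2\,g\,d\gamma<\infty,\qquad \sup_n \int \|D^2\varphi_n\|_{\mathcal{HS}}^2\,g\,d\gamma<\infty.
$$
After extending the finite-dimensional operators $D^2\varphi_n$ by zero in the remaining coordinates, reflexivity of the Hilbert spaces $L^2(g\cdot\gamma,H)$ and $\mathcal{H}^2_g$ furnishes a single subsequence $\{n_k\}$ along which $\nabla\varphi_{n_k}\rightharpoonup w$ and $D^2\varphi_{n_k}\rightharpoonup A$. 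Weak lower semicontinuity delivers the displayed $L^2$ bound on $\|A\|_{\mathcal{HS}}$ once we have legitimately renamed $A$ as $D^2\varphi$.

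\emph{Step 2 (identification with the Fu--\"{U}st\"{u}nel limit).} Mazur's theorem realizes $w$ as the strong $L^2(g\cdot\gamma,H)$ limit of some convex combinations of $\{\nabla\varphi_{n_k}\}$. The result recalled from \cite{FU1} asserts that some sequence of convex combinations of $\{\nabla\varphi_n\}$ converges strongly in the same space to the gradient of the optimal transport potential; two sequences of convex combinations of a weakly convergent sequence share the same limit, so $w$ coincides with the object already called $\nabla\varphi$. We keep this notation. Since $\varphi_n\to\varphi$ in $L^1(g\cdot\gamma)$ by \cite{FU1}, we may assume $\varphi_{n_k}\to\varphi$ pointwise $g\cdot\gamma$-a.e.\ after a further subsequence.

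\emph{Step 3 (pointwise partial derivatives).} Since $\varphi$ need not lie in $L^2(g\cdot\gamma)$, I truncate: set $\varphi^N=\varphi\wedge N\vee(-N)$ and $\varphi_n^N=\varphi_n\wedge N\vee(-N)$. Convexity of $\Phi_n$ on every finite-dimensional coordinate subspace makes $t\mapsto\varphi_n(x+te_i)+t^2/2$ convex for every $x$, and the property of $u+t^2/2$ being convex is preserved under $L^1_{\mathrm{loc}}$ limits; Fubini transfers the a.e.\ convergence of Step~2 to $L^1_{\mathrm{loc}}$ convergence along $g\cdot\gamma$-a.e.\ line $x+\mathbb R e_i$. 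One may therefore choose a Borel version of $\varphi$ for which $t\mapsto\varphi(x+te_i)+t^2/2$ is convex for $g\cdot\gamma$-a.e.\ $x$, so this version possesses a classical partial derivative $\varphi_{x_i}$ \ $g\cdot\gamma$-a.e., and so does $\varphi^N$, with $|\nabla\varphi^N|\le|\nabla\varphi|$. To match $\varphi^N_{x_i}$ with $\langle\nabla\varphi,e_i\rangle$, use that $|\partial_{x_i}\varphi_n^N|\le |\partial_{x_i}\varphi_n|$ bounds $\partial_{x_i}\varphi_n^N$ uniformly in $L^2(g\cdot\gamma)$; passing to a weak limit and writing (\ref{ek4.1}) for the bounded function $\varphi_n^N$ (for which $x_i-g_{x_i}/g\in L^2(g\cdot\gamma)$ is an admissible factor), the left-hand side converges because $\varphi_n^N\to\varphi^N$ in $L^2(g\cdot\gamma)$, identifying the weak limit with the Sobolev, hence classical, derivative $\varphi^N_{x_i}$. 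Testing against $\xi\in C_0^\infty$ supported in $\{|\varphi|<N\}$ and comparing with the analogous identity for $\varphi_n$ (without truncation) gives $\varphi^N_{x_i}=\langle\nabla\varphi,e_i\rangle$ on $\{|\varphi|<N\}$; letting $N\uparrow\infty$ patches these together into $\nabla\varphi$.

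\emph{Step 4 (Sobolev second derivative and main obstacle).} For the second derivative, apply (\ref{ek4.1}) with $f=\partial_{x_j}\varphi_n^N$ in the integration by parts formula: the left side passes to the limit by weak convergence of $D^2\varphi_{n_k}$ in $\mathcal{H}^2_g$ against any $\xi\in C_0^\infty$; the right side passes to the limit by Step~3 together with the fact that $\partial_{x_j}\varphi^N\in L^2(g\cdot\gamma)$ dominates the truncation effects as $N\to\infty$. The resulting identity is (\ref{ek4.1}) with $A_{ij}$ playing the role of the Sobolev derivative of $\langle\nabla\varphi,e_j\rangle$, and we rename $A=D^2\varphi$. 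The principal obstacle is Step~3: absent $\varphi\in L^2(g\cdot\gamma)$, the Sobolev formalism does not directly apply to $\varphi$, and one must simultaneously manufacture a pointwise representative with classical partial derivatives and match them $g\cdot\gamma$-a.e.\ with the coordinates of the weak $L^2$-limit. The convexity of $\Phi_n$ (which survives the $L^1$-limit and yields such a representative) together with the bounded truncations $\varphi^N$ (which returns the question to the Sobolev framework on bounded functions) is what makes the matching go through.
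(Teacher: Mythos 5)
Your Steps 1, 2, and 4 line up with the paper's argument: weak compactness from the dimension-free bounds, extraction of subsequential weak limits in $L^2(g\cdot\gamma,H)$ and $\mathcal{H}^2_g$, Mazur-type identification with the Feyel--\"Ust\"unel limit, and identification of the Sobolev second derivative via integration by parts against cylindrical test functions (which is indeed ``even simpler'' once $\varphi_{x_i}\in L^2(g\cdot\gamma)$ is in hand). The real divergence, and the problem, is in Step 3.

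You produce the pointwise representative of $\varphi$ by passing the convexity of $t\mapsto\varphi_n(x+te_i)+t^2/2$ through an ``$L^1_{\mathrm{loc}}$ limit along $g\cdot\gamma$-a.e.\ line.'' But what the hypothesis $\varphi_n\to\varphi$ in $L^1(g\cdot\gamma)$ gives, after disintegration of $\gamma$ along lines $x+\mathbb{R}e_i$, is convergence in $L^1$ with respect to the \emph{weighted} one-dimensional measure $g(x+te_i)\varrho_x(t)\,dt$, not with respect to Lebesgue measure. When $g$ vanishes on a set of positive linear measure along the line, you cannot conclude $L^1_{\mathrm{loc}}(dt)$ convergence, and the convex representative is therefore only determined on the set $\{t\colon g(x+te_i)>0\}$, which need not be an interval nor dense. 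This is precisely why the paper declares at the outset that it will \emph{not} use the $1$-convexity route: the convexity structure survives the limit only where the limit is controlled, and without equivalence of $g\cdot\gamma$ and $\gamma$ that is a proper subset of the line. The paper circumvents this by showing the measure $\varphi^N g\cdot\gamma$ is Skorokhod-differentiable along $e_i$ (directly from the weak limit $h_i I_{\{|\varphi|\le N\}}$ being a Sobolev derivative in the weighted sense) and invoking the theorem on differentiable measures that a differentiable measure has a density version absolutely continuous along the corresponding lines; combining this with the analogous property of $g$ then yields absolute continuity of $t\mapsto\varphi^N(x+te_i)$ on closed intervals where $g$ does not vanish, which is enough for the a.e.\ pointwise derivative and its identification with $h_i$.

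A second, smaller defect in Step 3 is the phrase ``testing against $\xi\in C_0^\infty$ supported in $\{|\varphi|<N\}$'': that set is merely Borel, not open, and in $\mathbb{R}^\infty$ there are in any case no compactly supported smooth cylindrical functions, so there is no such $\xi$ to test against. The paper avoids this entirely by doing all the identification at the level of generalized derivatives of the truncated function $\varphi^N$ (for which (\ref{ek4.1}) applies because $\varphi^N\in L^\infty\subset L^2(g\cdot\gamma)$) and never needs test functions localized inside $\{|\varphi|<N\}$. To repair your argument you would need to replace the convexity-in-the-limit claim with the differentiable-measures mechanism (or something equally strong) and remove the appeal to test functions supported in a sublevel set of $\varphi$.
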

\begin{proof}
We only consider  convergence  of $\nabla \varphi_n$, since the case of second derivatives is
simpler.
Let us  consider the sequence
 $f_n = \varphi_n \wedge N \vee (-N)$, where $N>0$ is chosen in such a way that
 $g \cdot \gamma (\{x\colon\, \varphi(x) = \pm N\}) =0$.
 Let $f= \varphi \wedge N \vee (-N)$. Passing to a subsequence we may assume that
   $\partial_{x_i} \varphi_n \to h_i\in L^2(g\cdot\gamma)$ weakly. Then for
   all smooth cylindrical functions $\xi$ we have
\begin{align*}
\int f \xi_{x_i} \ g d\gamma   & =
\lim_{n \to \infty}
\int f_n  \xi_{x_i} \ g d\gamma=
\\
&
=  \lim_{n \to \infty} \int f_n \xi \Bigl( x_i - \frac{g_{x_i}}{g} \Bigr) \ g  d\gamma
- \lim_{n \to \infty} \int (f_n)_{x_i} \xi \ g d\gamma=
\\&
=  \int f \xi \Bigl( x_i - \frac{g_{x_i}}{g} \Bigr) \ g  d\gamma
- \lim_{n \to \infty} \int (\varphi_n)_{x_i}  I_{\{|\varphi_n| \le N\}}\xi \ g d\gamma=
\\&
=
\int f \xi \Bigl( x_i - \frac{g_{x_i}}{g} \Bigr) \ g d\gamma
-  \int h_i  I_{\{|\varphi| \le N\}}\xi \ g  d\gamma.
\end{align*}
Therefore, the Sobolev derivative of $f$ with respect to $x_i$ (and the measure $g\cdot\gamma$)
coincides with $h_i I_{\{|\varphi| \le N\}}$. In the language of differentiable
measures (see \cite{Bdifm})
this means the differentiability of the measure
$fg\cdot \gamma$ along the vector $e_i$ of the standard basis in $l^2$, which implies
(see \cite[Section~3.5 and Section~6.3]{Bdifm})
that $fg$ has a version such that the functions $t\mapsto f(x+te_i)g(x+te_i)$
are locally absolutely continuous
(that is absolutely continuous on bounded intervals)
for $\gamma$-a.e.~$x$. By our assumption,
the same is true for $\sqrt{g}$. Moreover, the derivative of $f(x+te_i)g(x+te_i)$
at $t=0$ equals $fg_{x_i}+h_i I_{\{|\varphi| \le N\}}g$ \ $\gamma$-a.e.
Once we choose a version of $g$ such that the functions $t\mapsto g(x+te_i)$
are locally absolutely continuous, we obtain a version of $f$ such
that $t\mapsto f(x+te_i)$ is absolutely continuous on every closed interval
on which the function $t\mapsto g(x+te_i)$ does not vanish.
For this version we have the estimate $|f_{x_i}|\le |h_i|$ \ $g\cdot\gamma$-a.e.
Since the conditional measures for $\gamma$ on the straight lines $x+\mathbb{R}^1e_i$
have Gaussian densities $\varrho_x$ and $h_i\in L^2(g\cdot\gamma)$, we
see that, for $\gamma$-a.e.~$x$, the integral of
$|\partial_t f(x+te_i)/\partial t|^2g(x+te_i)\varrho_x(t)$ over $\mathbb{R}$
is majorized by the integral of $|h_i(x+te_i)|^2g(x+te_i)\varrho_x(t)$.
Remembering that $f= \varphi \wedge N \vee (-N)$ depends also on $N$
suppressed in our notation and that these functions converge to $\varphi$ pointwise,
we obtain a version of $\varphi$ such that the
function $t\mapsto \varphi(x+te_i)$ is absolutely continuous on closed intervals
without zeros of~$g$.
It also follows that $h_i=\varphi_{x_i}$
almost everywhere with respect to the measure $g \cdot \gamma$.
The assertion with $\varphi_{x_ix_j}$ is even simpler, since we have
$\varphi_{x_i}\in L^2(g\cdot\gamma)$.
\end{proof}

It is important that the mapping $\nabla\varphi$ introduced in this proposition
coincides $g\cdot\gamma$-a.e. with the one constructed in \cite{FU1}
(as a limit of convex combinations). It will be seen directly that
${\rm I}+\nabla\varphi$ takes $g\cdot\gamma$ to $\gamma$ as soon as we check
that $\nabla\varphi$ can be obtained as a limit of $\nabla\varphi_n$
pointwise $g\cdot\gamma$-a.e.
Let us show that in fact we have strong convergence in $L^2(g\cdot\gamma,H)$
(which gives a subsequence convergent almost everywhere)
and convergence in the Hilbert--Schmidt norm for a subsequence in $\{D^2\varphi_n\}$.

\begin{proposition}\label{p4.2}
One has $\nabla \varphi_n \to \nabla \varphi$ in $L^2(g \cdot \gamma, H)$
and
$$
\|D^2 \varphi_n - D^2 \varphi \|_{\mathcal{HS}} \to 0\quad \hbox{$g\cdot \gamma$-a.e.}
$$
\end{proposition}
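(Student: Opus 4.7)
My plan is to upgrade the weak convergence $\nabla\varphi_n\rightharpoonup\nabla\varphi$ to strong convergence in $L^2(g\cdot\gamma,H)$ via a Cauchy argument, and then to obtain the pointwise Hilbert--Schmidt convergence of the second derivatives by passing to a further subsequence after strengthening the weak convergence in $\mathcal{H}^2_g$.

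For the first part, I would apply Proposition~\ref{p2.4} in dimension $n+k$ with $f=g_{n+k}$ and $g=g_n$, where $g_n$ is viewed as a density on $\mathbb{R}^{n+k}$ that is independent of the last $k$ coordinates, so that its optimal transportation to $\gamma$ is $\nabla\Phi_n\oplus\mathrm{id}_{\mathbb{R}^k}$. Dropping the nonnegative trace--log term yields
\[
\int g_{n+k}\log(g_{n+k}/g_n)\,d\gamma\;\ge\;\tfrac{1}{2}\int |\nabla\varphi_{n+k}-\nabla\varphi_n|^2\,g_{n+k}\,d\gamma.
\]
Both sides depend only on the first $n+k$ coordinates, so by the tower property the measure $g_{n+k}\,d\gamma$ may be replaced by $g\,d\gamma$; the same manipulation shows that the left-hand side equals $\mathrm{Ent}_\gamma g_{n+k}-\mathrm{Ent}_\gamma g_n$. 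Since $\{\mathrm{Ent}_\gamma g_n\}$ is monotone (Jensen) and converges to $\mathrm{Ent}_\gamma g<\infty$ (martingale convergence combined with the uniform integrability of $g_n\log g_n$, which follows from the conditional Jensen bound $g_n\log g_n\le \mathrm{I\!E}^n_\gamma(g\log g)$), the right-hand side tends to zero as $n,k\to\infty$. Hence $\{\nabla\varphi_n\}$ is Cauchy in $L^2(g\cdot\gamma,H)$, and uniqueness of limits identifies its strong limit with $\nabla\varphi$.

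For the second assertion I would aim at strong convergence $D^2\varphi_n\to D^2\varphi$ in $\mathcal{H}^2_g$; a further subsequence then converges in Hilbert--Schmidt norm $g\cdot\gamma$-almost everywhere. Given the weak convergence, this reduces to the norm convergence $\int\|D^2\varphi_n\|^2_{\mathcal{HS}}\,g\,d\gamma\to \int\|D^2\varphi\|^2_{\mathcal{HS}}\,g\,d\gamma$. I would attempt this via the finite-dimensional identity~(\ref{ek2.2}) applied to $g_n$ (after mollifying by the Ornstein--Uhlenbeck semigroup as in the proof of Theorem~\ref{t3.2}): all terms on the right have definite sign, $\mathrm{I}_\gamma g_n\to\mathrm{I}_\gamma g$ and $\mathrm{Ent}_\gamma g_n\to \mathrm{Ent}_\gamma g$, and Fatou applied to the $\log\det_2$ and third-derivative terms, together with the reverse Proposition~\ref{p2.10} lower bound, should pin down $\limsup \int\|D^2\varphi_n\|^2_{\mathcal{HS}}\,g\,d\gamma$ by $\int\|D^2\varphi\|^2_{\mathcal{HS}}\,g\,d\gamma$.

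The main obstacle is exactly this identification: the third-derivative term $\sum_i\int \mathrm{Tr}[(I+D^2\varphi_n)^{-1}(D^2\varphi_n)_{x_i}]^2\,g\,d\gamma$ has no a priori uniform bound under the sole hypothesis $\sqrt{g}\in W^{2,1}(\gamma)$, so one cannot simply pass to the limit term by term. A safer alternative is to retain the trace--log term in Proposition~\ref{p2.4} applied to $(g_{n+k},g_n)$ as above and exploit the elementary bound $\mathrm{Tr}(AB^{-1})-d-\log\det(AB^{-1})\gtrsim \|A-B\|^2_{\mathcal{HS}}$, valid on sets where the operator norms of $A$ and $B^{-1}$ are controlled. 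Truncating by $\{\|D^2\Phi_{n+k}\|\le R\}$ and absorbing the tails using the uniform estimate $\int \|D^2\varphi_n\|^2_{\mathcal{HS}}\,g\,d\gamma\le \mathrm{I}_\gamma g$ from Proposition~\ref{p2.10} together with the pointwise bound of Theorem~\ref{t3.1} should give local Cauchy behaviour of $\{D^2\varphi_n\}$ in Hilbert--Schmidt norm in $g\cdot\gamma$-measure, and a diagonal subsequence then yields the stated $g\cdot\gamma$-a.e.\ convergence.
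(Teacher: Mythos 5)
Your treatment of the first assertion matches the paper's: both apply Proposition~\ref{p2.4} to a pair of conditional expectations, drop the nonnegative trace--log term, telescope the entropies, and use monotone convergence of $\mathrm{Ent}_\gamma g_n\uparrow\mathrm{Ent}_\gamma g$ to extract a Cauchy sequence in $L^2(g\cdot\gamma,H)$. That part is sound.

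For the second assertion, you correctly diagnose that the route through identity~(\ref{ek2.2}) is blocked, because the third-derivative term carries no uniform bound under $\sqrt{g}\in W^{2,1}(\gamma)$ alone. Your ``safer alternative,'' however, still has a genuine gap. You want to bound the trace--log quantity $\mathrm{Tr}(AB^{-1})-d-\log\det(AB^{-1})$ from below by a constant times $\|A-B\|^2_{\mathcal{HS}}$, with $A=D^2\Phi_{n+k}$ and $B=D^2\Phi_n$. Writing $C=B^{-1/2}AB^{-1/2}$ with eigenvalues $c_i$, the quantity equals $\sum_i(c_i-1-\log c_i)$, and the comparison $c-1-\log c\gtrsim(c-1)^2$ is valid only when the $c_i$ are bounded above; one has $\|C\|\le\|A\|\,\|B^{-1}\|$, so this requires \emph{simultaneous} control on $\|D^2\Phi_{n+k}\|$ and on $\|(D^2\Phi_n)^{-1}\|$. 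Your truncation $\{\|D^2\Phi_{n+k}\|\le R\}$ handles the first factor, but nothing in the hypotheses controls the second: under the sole assumption $\sqrt{g}\in W^{2,1}(\gamma)$, the convex potentials $\Phi_n$ can degenerate, i.e.\ $D^2\Phi_n$ can have eigenvalues arbitrarily close to zero on sets of positive measure, and there is no a priori uniform lower bound. Moreover, the ``pointwise bound of Theorem~\ref{t3.1}'' you invoke (the Caffarelli-type estimate~(\ref{ek3.2})) is proved under the additional assumptions that $g>0$, $v=-\log g$ is twice continuously differentiable and $M(\mathrm{I}+D^2v)$ is bounded --- none of which is available in Proposition~\ref{p4.2}. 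So the tail-absorption step does not go through.

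The paper circumvents the degeneracy issue differently: it keeps the trace--log term in the form $-\int\log\det_2\bigl[(\mathrm{I}+D^2\varphi_m)^{-1/2}(\mathrm{I}+D^2\varphi_n)(\mathrm{I}+D^2\varphi_m)^{-1/2}\bigr]g\,d\gamma$ (nonnegative, with no quadratic lower bound attempted), exploits the \emph{convexity} of $-\log\det_2$ to move the Ces\`aro average inside, invokes the Banach--Saks property of the Hilbert space $\mathcal{H}^2_g$ to get a subsequence whose Ces\`aro means of $\mathrm{I}+D^2\varphi_n$ converge to $\mathrm{I}+D^2\varphi$ a.e.\ in Hilbert--Schmidt norm, applies Fatou, and then lets $m\to\infty$ to conclude that the log-determinant term vanishes a.e.\ along a subsequence, from which the a.e.\ Hilbert--Schmidt convergence $D^2\varphi_m\to D^2\varphi$ follows. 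This avoids any need for pointwise upper or lower bounds on $D^2\Phi_n$; the strict positivity required for the final deduction is supplied by the fixed limit operator $\mathrm{I}+D^2\varphi$ at a.e.\ point, not by uniform estimates on $D^2\Phi_n$. You would need to replace your truncation step with an argument of this type (or something else that is insensitive to small eigenvalues of $D^2\Phi_n$) to close the gap.
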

\begin{proof}
Applying Proposition \ref{p2.4} to the functions  $g_n$ and $g_m$ with $m<n$
(the conditional expectations defined above), we obtain
\begin{align*}
 \int g_n  \log \frac{g_n}{g_m} \ d\gamma &=
\mbox{\rm{Ent}}_{\gamma} g_n - \mbox{\rm{Ent}}_{\gamma} g_m
 \ge
\frac{1}{2} \int \bigl( \nabla \varphi_n - \nabla \varphi_m \bigr)^2 \ g_n  d\gamma \, -
\\
&-  \int  \log {\det}_2 \bigl(  \mbox{\rm{I}}  + D^2 \varphi_m \bigr)^{-1/2}
\bigl(  \mbox{\rm{I}}  + D^2 \varphi_n \bigr) \bigl(  \mbox{\rm{I}}
+ D^2 \varphi_m \bigr)^{-1/2} \ g_n  d\gamma =
\\& =
\frac{1}{2} \int \bigl( \nabla \varphi_n - \nabla \varphi_m \bigr)^2 \ g  d\gamma \, -
\\
&-  \int  \log {\det}_2 \bigl(  \mbox{\rm{I}}
+ D^2 \varphi_m \bigr)^{-1/2} \bigl(  \mbox{\rm{I}}
+ D^2 \varphi_n \bigr) \bigl(  \mbox{\rm{I}}  + D^2 \varphi_m \bigr)^{-1/2} \ g d\gamma;
\end{align*}
we recall that
$$
\sup_n \int \| D^2 \varphi_n\|^2 \ g  d\gamma < \infty,
$$
 hence  $D^2_a \varphi_n$ in the estimates from
 Proposition \ref{p2.4} can be replaced by $D^2 \varphi_n$.
Thus we have proved that
$$
\mbox{\rm{Ent}}_{\gamma} g_n - \mbox{\rm{Ent}}_{\gamma} g_m \ge
\frac{1}{2} \int \bigl( \nabla \varphi_n - \nabla \varphi_m \bigr)^2 \ g  d\gamma.
$$
Passing to the limit $n \to \infty$, by the properties of weak convergence we obtain
$$
\mbox{\rm{Ent}}_{\gamma} g - \mbox{\rm{Ent}}_{\gamma} g_m \ge
\frac{1}{2} \int \bigl( \nabla \varphi - \nabla \varphi_m \bigr)^2 \ g d\gamma.
$$
Now the result follows by letting $m \to \infty$.

To prove the second relation we use the convexity of  $-\log \det_2$:
\begin{align*}
\frac{1}{N} & \sum_{n=m+1}^{m+N}  \mbox{\rm{Ent}}_{\gamma} g_n - \mbox{\rm{Ent}}_{\gamma} g_m \ge
\\&
\ge -
\frac{1}{N} \sum_{n=m+1}^{m+N}  \int  \log {\det}_2\bigl[
\bigl(\mbox{\rm{I}}  + D^2 \varphi_m \bigr)^{-1/2}
\bigl(\mbox{\rm{I}}  + D^2 \varphi_n \bigr)\bigl(\mbox{\rm{I}}  + D^2 \varphi_m \bigr)^{-1/2}\bigr]
\ g d\gamma
\ge
\\&
\ge -   \int  \log {\det}_2 \Bigl[(\mbox{\rm{I}}  + D^2 \varphi_m)^{-1/2}
 \frac{1}{N} \sum_{n=m+1}^{m+N} (\mbox{\rm{I}}  + D^2 \varphi_n)
  (\mbox{\rm{I}}  + D^2 \varphi_m)^{-1/2} \Bigr] \ g d\gamma.
\end{align*}
Passing to a subsequence  (denoted again by $\varphi_n$) we obtain
$$
\frac{1}{N} \sum_{n=m+1}^{m+N} \bigl(  \mbox{\rm{I}}  + D^2 \varphi_n \bigr) \to \mbox{\rm{I}}  + D^2 \varphi
$$
in the Hilbert--Schmidt norm  $g \cdot \gamma$-a.e. Hence, by the Fatou theorem
$$
  \mbox{\rm{Ent}}_{\gamma} g- \mbox{\rm{Ent}}_{\gamma} g_m \ge
\int  \log {\det}_2\Bigl[(\mbox{\rm{I}}  + D^2 \varphi_m)^{-1/2} (\mbox{\rm{I}}  +
D^2 \varphi) (\mbox{\rm{I}}  + D^2 \varphi_m)^{-1/2} \Bigr] \ g d\gamma.
$$
Therefore, passing to a subsequence, we have
$$
\log {\det}_2\Bigl[(\mbox{\rm{I}}+ D^2 \varphi_m)^{-1/2} (\mbox{\rm{I}}
+ D^2 \varphi)(\mbox{\rm{I}}  + D^2 \varphi_m)^{-1/2}\Bigr] \to 0
\quad\hbox{$g\cdot\gamma $-a.e. as $m \to \infty$.}
$$
 Consequently,
$
D^2 \varphi_m  \to D^2 \varphi
$
in the Hilbert--Schmidt norm $g\cdot\gamma $-a.e.
\end{proof}

The next result follows from the previous proposition and the uniform  boundedness of the integrals
$\displaystyle\int \| D^2 \varphi_n \|^2_{\mathcal{HS}} \  g d\gamma$.

\begin{corollary}
In the situation of the previous proposition
$$
\int \| D^2 \varphi_n - D^2 \varphi \|^{p}_{\mathcal{HS}} \  g  d\gamma \to 0
\quad\hbox{whenever $0<p<2$.}
$$
\end{corollary}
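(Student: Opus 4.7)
The plan is to combine the almost everywhere convergence supplied by Proposition~\ref{p4.2} with a uniform integrability argument based on the given uniform $L^2(g\cdot\gamma)$ bound, and then invoke Vitali's convergence theorem.

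First, I would fix the subsequence along which $\|D^2\varphi_n-D^2\varphi\|_{\mathcal{HS}}\to 0$ \ $g\cdot\gamma$-a.e. (provided by Proposition~\ref{p4.2}) and denote its terms again by $\varphi_n$. Passing to Fatou's lemma in $\int\|D^2\varphi_n\|^2_{\mathcal{HS}}\,g\,d\gamma\le C$ yields $\int\|D^2\varphi\|^2_{\mathcal{HS}}\,g\,d\gamma<\infty$. Hence, by the triangle inequality for the Hilbert--Schmidt norm,
$$
\sup_n \int \|D^2\varphi_n-D^2\varphi\|^2_{\mathcal{HS}} \ g\,d\gamma
\le 2\sup_n \int \|D^2\varphi_n\|^2_{\mathcal{HS}} \ g\,d\gamma
+ 2\int \|D^2\varphi\|^2_{\mathcal{HS}} \ g\,d\gamma
<\infty.
$$

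Next, I would fix $0<p<2$ and set $q:=2/p>1$. The previous display shows that the sequence $\|D^2\varphi_n-D^2\varphi\|^p_{\mathcal{HS}}$ is bounded in $L^q(g\cdot\gamma)$. Since $q>1$, this yields uniform integrability with respect to the finite measure $g\cdot\gamma$ by the de la Vall\'ee Poussin criterion (or directly: for any measurable set $E$, H\"older's inequality gives
$$
\int_E \|D^2\varphi_n-D^2\varphi\|^p_{\mathcal{HS}} \ g\,d\gamma
\le \Bigl(\int \|D^2\varphi_n-D^2\varphi\|^2_{\mathcal{HS}} \ g\,d\gamma\Bigr)^{p/2}
(g\cdot\gamma(E))^{(2-p)/2},
$$
whose right-hand side is uniformly small for $g\cdot\gamma(E)$ small).

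Combining the $g\cdot\gamma$-a.e. convergence with the uniform integrability just established, Vitali's convergence theorem gives
$$
\int \|D^2\varphi_n-D^2\varphi\|^p_{\mathcal{HS}} \ g\,d\gamma \to 0.
$$
Finally, since the limit is the same along every subsequence extracted as above, the full original sequence converges, which gives the corollary. The only mildly delicate point is ensuring uniform integrability is interpreted with respect to the finite measure $g\cdot\gamma$; the H\"older estimate above handles this in one line and avoids any separate tightness argument.
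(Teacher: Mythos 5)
Your proof is correct and fills in exactly what the paper's one-line justification gestures at: the $g\cdot\gamma$-a.e.\ convergence $\|D^2\varphi_n-D^2\varphi\|_{\mathcal{HS}}\to 0$ from Proposition~\ref{p4.2}, combined with the uniform bound $\sup_n\int\|D^2\varphi_n\|^2_{\mathcal{HS}}\,g\,d\gamma<\infty$, gives $L^2$-boundedness hence uniform integrability of the $p$-th powers for $p<2$, and Vitali's theorem closes the argument. This is the same route the paper intends; the H\"older estimate you supply and the subsequence remark at the end are exactly the details one would want to see spelled out.
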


We now prove the change of variables formula
$$
g = \det{}_{2} ({\rm I} + D^2 \varphi) \exp \Bigl( \mathcal{L} \varphi - \frac{1}{2} |\nabla \varphi|^2\Bigr),
$$
where  $\mathcal{L} \varphi$
is defined as a function in $L^1(g\cdot\gamma)$
satisfying  the following duality relation:
\begin{equation}\label{ek4.2}
\int \mathcal{L}  \varphi \ \xi \ g  d\gamma = - \int \langle \nabla \varphi,
\nabla \xi \rangle \ g  d\gamma - \int \langle \nabla g, \nabla \varphi \rangle \xi \ d\gamma
\end{equation}
for smooth cylindrical functions $\xi$; existence of $\mathcal{L} \varphi$ is also part of the proof.

\begin{lemma} The sequence
$\{\mathcal{L} \varphi_n\}$ converges $g \cdot \gamma$-a.e. to some function~$F$
and, moreover, the following change of variables formula holds:
$$
g = \det ({\rm I} + D^2 \varphi) \exp \Bigl( F - \frac{1}{2} |\nabla \varphi|^2\Bigr).
$$
\end{lemma}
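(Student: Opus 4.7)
The plan is to pass to the limit in the finite-dimensional change of variables formula satisfied by each $\varphi_n$, using the three strong convergences delivered by Proposition~\ref{p4.2} together with martingale convergence for $g_n$. Since $g_n$ depends on only finitely many variables and $\sqrt{g_n}\in W^{2,1}(\gamma)$, McCann's change of variables formula in $\mathbb{R}^n$ applied to the transport $T_n=\mbox{\rm I}+\nabla\varphi_n$ of $g_n\cdot\gamma$ to $\gamma$ yields, pointwise $g_n\cdot\gamma$-a.e.,
\begin{equation*}
\log g_n = \log{\det}_2(\mbox{\rm I}+D^2\varphi_n) + \mathcal{L}\varphi_n - \tfrac{1}{2}|\nabla\varphi_n|^2.
\end{equation*}
Jensen's inequality for conditional expectations gives $g_n>0$ wherever $g>0$, so this identity also holds $g\cdot\gamma$-a.e.

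Next I would pass to the limit along the subsequence supplied by Proposition~\ref{p4.2}. By the martingale convergence theorem $g_n\to g$ \ $\gamma$-a.e., so $\log g_n\to\log g$ on the $g\cdot\gamma$-full set $\{g>0\}$. Proposition~\ref{p4.2} gives $\|D^2\varphi_n-D^2\varphi\|_{\mathcal{HS}}\to 0$ \ $g\cdot\gamma$-a.e., and continuity of the Fredholm--Carleman determinant in the Hilbert--Schmidt norm (combined with $\mbox{\rm I}+D^2\varphi_n\ge 0$, which prevents sign changes) implies $\log{\det}_2(\mbox{\rm I}+D^2\varphi_n)\to\log{\det}_2(\mbox{\rm I}+D^2\varphi)$ \ $g\cdot\gamma$-a.e.\ wherever the limit is finite. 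Finally, the strong $L^2(g\cdot\gamma,H)$ convergence of $\nabla\varphi_n$ to $\nabla\varphi$ from Proposition~\ref{p4.2} yields, after extracting a further subsequence, pointwise $|\nabla\varphi_n|^2\to|\nabla\varphi|^2$ \ $g\cdot\gamma$-a.e.

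Rearranging the finite-dimensional identity, the three convergent terms force $\mathcal{L}\varphi_n$ to converge $g\cdot\gamma$-a.e.\ to
\begin{equation*}
F := \log g - \log{\det}_2(\mbox{\rm I}+D^2\varphi) + \tfrac{1}{2}|\nabla\varphi|^2,
\end{equation*}
and exponentiating delivers the stated change of variables formula (with ${\det}_2$ interpreted in place of $\det$).

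The main technical obstacle is controlling $\log{\det}_2(\mbox{\rm I}+D^2\varphi_n)$ along the sequence: the Fredholm--Carleman determinant could a priori degenerate to zero in the limit, making the logarithm diverge to $-\infty$ and obstructing the identification of $F$ as a finite a.e.\ limit. To rule this out I would use the finite-dimensional identity itself to write ${\det}_2(\mbox{\rm I}+D^2\varphi_n) = g_n\exp\bigl(-\mathcal{L}\varphi_n+\tfrac{1}{2}|\nabla\varphi_n|^2\bigr)>0$ \ $g\cdot\gamma$-a.e., and then transfer strict positivity to the limit via Hilbert--Schmidt convergence; the assumption $\sqrt{g}\in W^{2,1}(\gamma)$ ensures through Proposition~\ref{p2.10} that $D^2\varphi$ is genuinely Hilbert--Schmidt, so ${\det}_2(\mbox{\rm I}+D^2\varphi)$ is meaningful and the a.e.\ limit is well-defined on the $g\cdot\gamma$-full set $\{g>0\}$.
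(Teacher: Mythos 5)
Your proposal follows the same route as the paper's proof: pass to the pointwise $g\cdot\gamma$-a.e.\ limit in the finite-dimensional change of variables formula for $\varphi_n$, using martingale convergence of $g_n$, strong $L^2(g\cdot\gamma,H)$ convergence of $\nabla\varphi_n$, and a.e.\ Hilbert--Schmidt convergence of $D^2\varphi_n$ from Proposition~\ref{p4.2}. The paper's own proof is terser and does not make explicit the one genuine subtlety that you correctly flag, namely whether ${\det}_2(\mathrm{I}+D^2\varphi)$ could vanish on a set of positive $g\cdot\gamma$-measure, in which case $\log{\det}_2(\mathrm{I}+D^2\varphi_n)$ would diverge to $-\infty$ there and $F$ would not be a finite function.

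However, the argument you offer to close this gap does not work as stated. From ${\det}_2(\mathrm{I}+D^2\varphi_n)>0$ for each $n$ and $\|D^2\varphi_n-D^2\varphi\|_{\mathcal{HS}}\to 0$ a.e.\ you can only conclude, by continuity of ${\det}_2$, that ${\det}_2(\mathrm{I}+D^2\varphi)\ge 0$; strict positivity is simply not preserved under pointwise limits, and Proposition~\ref{p2.10} only tells you that $D^2\varphi$ is Hilbert--Schmidt a.e., which makes ${\det}_2(\mathrm{I}+D^2\varphi)$ well-defined but not necessarily positive. The missing ingredient is quantitative: since $\mathrm{I}+D^2\varphi_n\ge 0$ forces ${\det}_2(\mathrm{I}+D^2\varphi_n)\le 1$, the functions $-\log{\det}_2(\mathrm{I}+D^2\varphi_n)$ are nonnegative, and their integrals against $g\cdot\gamma$ are uniformly bounded by a constant times $\mathrm{I}_\gamma g$, as follows from Remark~\ref{rem2.3} and is spelled out in the remark directly after this Lemma in the paper. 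Combining this uniform $L^1(g\cdot\gamma)$ bound with the a.e.\ convergence and Fatou's lemma shows that $-\log{\det}_2(\mathrm{I}+D^2\varphi)$ is $g\cdot\gamma$-integrable, hence finite a.e., so ${\det}_2(\mathrm{I}+D^2\varphi)>0$ \ $g\cdot\gamma$-a.e. With this step supplied, the rest of your argument is sound and matches the intended proof.
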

\begin{proof} It follows from the finite-dimensional change of variables formula
 (\ref{ek1.1}) and convergence  $g_n \to g$, $|\nabla \varphi_n - \nabla \varphi| \to 0$,
 $\|D^2 \varphi_n - D^2 \varphi\|^2_{\mathcal{HS}} \to 0$
 that the functions  $\mathcal{L} \varphi_n(x)$ have a limit  $F(x)$ for $g \cdot \gamma$-a.e.~$x$. Clearly,
the desired formula holds in the limit.
\end{proof}

For notational simplicity, from now on we assume that the above properties established for certain subsequences hold
for the whole sequence of indices.

\begin{remark}
{\rm
We shall see that $\mathcal{L} \varphi$ coincides with
$\lim\limits_{n\to\infty}\mathcal{L} \varphi_n$ in $L^1(g\cdot\gamma)$.
It is not difficult to check that
$\{\mathcal{L} \varphi_n\}$ is bounded in $L^1(g\cdot\gamma)$, so
$\lim\limits_{n\to\infty}\mathcal{L} \varphi_n\in L^1(g\cdot\gamma)$.
Indeed, by the finite-dimensional change of variables formula  from \cite{McCann97}, \cite{Vill}
 we have
$$
g_n = {\det}_{2} ({\rm I} + D^2 \varphi_n) \exp\Bigl( \mathcal{ L} \varphi_n
- \frac{1}{2} |\nabla \varphi_n|^2 \Bigr).
$$
Hence
$$
\mathcal{L} \varphi_n = \log g_n + \frac{1}{2} |\nabla \varphi_n|^2 - \log  {\det}_{2}({\rm I} + D^2 \varphi_n).
$$
Integrating with respect to  $g_n \cdot \gamma$ and integrating in the left-hand side by parts we have
$$
\int \mathcal{L} \varphi_n \ g_n  d\gamma = -  \int \langle \nabla \varphi_n, \nabla g_n \rangle \ d\gamma.
$$
Hence
\begin{align*}
\frac{1}{2} \mbox{I}_{\gamma} g_n +  & \frac{1}{2}\int |\nabla \varphi_n|^2 \ g_n  d\gamma
\ge  -\int \langle \nabla \varphi_n, \nabla g_n \rangle \ d\gamma=
\\& = \mbox{Ent}_{\gamma} g_n + \frac{1}{2}\int |\nabla \varphi_n|^2 \ g_n d\gamma
- \int \log  {\det}_{2}({\rm I} + D^2 \varphi_n) \ g_n  d\gamma.
\end{align*}
 We see that the integrals of  $ - \log {\det}_{2}({\rm I} + D^2 \varphi_n)$
with respect to $g_n \cdot \gamma$ are finite and uniformly bounded in  $n$.
One can easily show that
$$
\sup_n \int | \mathcal{L} \varphi_n  | \ g_n  d\gamma < \infty.
$$
Indeed,
$$
|\mathcal{L} \varphi_n | \le |\log g_n |+ \frac{1}{2} |\nabla \varphi_n|^2
- \log  {\det}_{2}({\rm I} + D^2 \varphi_n).
$$
The terms on the right are nonnegative and the corresponding integrals
with respect to  $g \cdot \gamma$  are uniformly bounded in $n$.
However, convergence in $L^1(g\cdot\gamma)$ is more difficult and will be
the main step in the proof of Theorem~\ref{t4.6}.
Under the additional assumption of $\gamma$-integrability of $1/g$
to a  power greater than~$1$ we show in the final remark that
$\varphi\in W^{p,2}(\gamma)$ with some $p>1$ and $\mathcal{L}\varphi$
exists in the usual sense of functions in $W^{p,2}(\gamma)$.
}\end{remark}

It remains to identify $F$ with $\mathcal{L}\varphi$, i.e. to show that $\mathcal{L}\varphi=F$ satisfies
(\ref{ek4.2}).

\begin{theorem}
\label{t4.6}
The change of variables formula
$$
g = \det ({\rm I} + D^2 \varphi) \exp \Bigl( \mathcal{L} \varphi - \frac{1}{2} |\nabla \varphi|^2\Bigr)
$$
holds $g \cdot \gamma$-a.e.
\end{theorem}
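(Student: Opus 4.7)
The plan is to pass to the limit in the finite-dimensional integration-by-parts identity
\[
\int \mathcal{L}\varphi_n \, \xi \, g_n \, d\gamma = -\int \langle \nabla\varphi_n, \nabla\xi\rangle \, g_n \, d\gamma - \int \langle \nabla\varphi_n, \nabla g_n\rangle \, \xi \, d\gamma,
\]
valid for every smooth cylindrical compactly supported test function $\xi$. Combined with the already-established pointwise a.e.\ identity $g = {\det}_2({\rm I}+D^2\varphi)\exp(F - \tfrac{1}{2}|\nabla\varphi|^2)$, where $F$ is the a.e.\ limit of $\{\mathcal{L}\varphi_n\}$ obtained in the preceding lemma, identifying the LHS limit with $\int F\xi g\, d\gamma$ will show that $F$ satisfies the duality~(\ref{ek4.2}) and hence that $F = \mathcal{L}\varphi$, which is exactly the theorem.

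The right-hand side converges cleanly. For $\xi$ cylindrical of dimension $m$ and $n \ge m$, both $\langle \nabla\varphi_n, \nabla\xi\rangle$ and $\langle \nabla\varphi_n, \nabla g\rangle \xi$ are $\mathcal{F}_n$-measurable, so $\int \langle \nabla\varphi_n, \nabla\xi\rangle g_n\, d\gamma = \int \langle \nabla\varphi_n, \nabla\xi\rangle g\, d\gamma$ and $\int \langle \nabla\varphi_n, \nabla g_n\rangle \xi\, d\gamma = \int \langle \nabla\varphi_n, \nabla g\rangle \xi\, d\gamma$ (using $\partial_{x_i} g_n = {\rm I\!E}^n \partial_{x_i} g$ for $i \le n$). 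Strong convergence $\nabla\varphi_n \to \nabla\varphi$ in $L^2(g\cdot\gamma, H)$ (Proposition~\ref{p4.2}), boundedness of $\nabla\xi$, and the Fisher information bound $\nabla g/\sqrt{g} \in L^2(\gamma, H)$ then give convergence of the RHS to $-\int \langle \nabla\varphi, \nabla\xi\rangle g\, d\gamma - \int \langle \nabla\varphi, \nabla g\rangle \xi\, d\gamma$.

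The LHS convergence is the main step: I will show $\mathcal{L}\varphi_n \to F$ in $L^1(g\cdot\gamma)$. Since $\mathcal{L}\varphi_n$ is $\mathcal{F}_n$-measurable, this gives $\int \mathcal{L}\varphi_n \xi g_n\, d\gamma = \int \mathcal{L}\varphi_n \xi g\, d\gamma \to \int F\xi g\, d\gamma$ for every bounded cylindrical $\xi$. Using the finite-dimensional change of variables formula, I decompose
\[
g_n \mathcal{L}\varphi_n = g_n \log g_n + \tfrac{1}{2}|\nabla\varphi_n|^2 g_n + Y_n, \qquad Y_n := -g_n \log {\det}_2({\rm I}+D^2\varphi_n) \ge 0.
\]
The first summand converges to $g\log g$ in $L^1(\gamma)$ via conditional Jensen's inequality $g_n\log g_n \le {\rm I\!E}^n(g\log g)$ (giving uniform integrability by martingale convergence) together with a.e.\ convergence; the second converges to $|\nabla\varphi|^2 g$ in $L^1(\gamma)$ by Scheff\'e's lemma, using $\int|\nabla\varphi_n|^2 g_n\, d\gamma = \int|\nabla\varphi_n|^2 g\, d\gamma \to \int|\nabla\varphi|^2 g\, d\gamma$ coming from strong $L^2(g\cdot\gamma, H)$ convergence.

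The main obstacle is the $L^1(\gamma)$ convergence of the nonnegative term $Y_n$ to $Y := -g\log{\det}_2({\rm I}+D^2\varphi)$. Fatou's lemma only gives $\int Y\, d\gamma \le \liminf_n \int Y_n\, d\gamma$, so by Scheff\'e's lemma it suffices to verify equality of the integrals. Integrating the finite-dimensional change of variables against $g_n d\gamma$ and using Gaussian integration by parts yields
\[
\int Y_n\, d\gamma = -\mbox{\rm Ent}_\gamma g_n - \int \langle \nabla\varphi_n, \nabla g_n\rangle\, d\gamma - \tfrac{1}{2}\int |\nabla\varphi_n|^2 g_n\, d\gamma,
\]
whose RHS has a known limit, call it $L$. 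Closing the Fatou gap $\int Y\, d\gamma = L$ is the essential technical step; my plan is to apply Proposition~\ref{p2.4} to the couple $(g_n, g)$ at the infinite-dimensional level, whose nonnegative trace-log-determinant remainder, together with $\mbox{\rm Ent}_\gamma g - \mbox{\rm Ent}_\gamma g_n \to 0$, should provide the quantitative spectral closeness of ${\rm I}+D^2\varphi_n$ to ${\rm I}+D^2\varphi$ needed to upgrade the Fatou inequality to the desired equality.
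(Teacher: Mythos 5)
Your overall reduction is sound and matches the paper's strategy in its opening moves: both you and the authors aim to show that $\{\mathcal{L}\varphi_n\}$ converges in $L^1(g\cdot\gamma)$ (equivalently, $g_n\mathcal{L}\varphi_n\to gF$ in $L^1(\gamma)$) so that the integration-by-parts identity passes to the limit, and both use the $\mathcal{F}_n$-measurability of the integrands and the strong convergence from Proposition~\ref{p4.2} to handle the right-hand side. Your decomposition $g_n\mathcal{L}\varphi_n = g_n\log g_n+\tfrac12|\nabla\varphi_n|^2 g_n+Y_n$ with a Scheff\'e argument on each piece is a reasonable route, and the first two pieces are treated correctly.

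The gap is in the third piece, and you correctly identify it yourself: you need $\int Y\,d\gamma=\lim_n\int Y_n\,d\gamma$, but Fatou only gives $\int Y\,d\gamma\le\lim_n\int Y_n\,d\gamma$, and your proposed way of closing the gap is to apply Proposition~\ref{p2.4} to the pair $(g_n,g)$ ``at the infinite-dimensional level.'' That proposition is proved only in $\mathbb{R}^d$ (it rests on McCann's a.e.\ change-of-variables formula and on Alexandroff second derivatives), and since $g$ is not cylindrical, the pair $(g_n,g)$ cannot be reduced to a finite-dimensional instance of it. The paper explicitly flags this kind of infinite-dimensional extension as \emph{not} justified: see the discussion immediately after~(\ref{ek2.3}), where the authors state that the infinite-dimensional analogue of their key identity ``is not justified in this paper, and we do not expect a proof\ldots to be simple because of a number of difficult regularity issues.'' Moreover, even granting such an inequality, it controls the mixed trace/$\log\det$ remainder built from $({\rm I}+D^2\varphi)({\rm I}+D^2\varphi_n)^{-1}$ and an entropy gap; it is not clear that this yields the scalar equality $\int Y\,d\gamma=L$ you need for Scheff\'e, as opposed to another one-sided bound. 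Note also that $\int Y\,d\gamma=L$ is, modulo integrability, equivalent to the duality identity~(\ref{ek4.2}) with $\xi\equiv1$, so you would be proving a particular instance of the conclusion in order to establish the convergence that is supposed to yield the conclusion --- the argument risks circularity unless the Fatou gap is closed by an independent estimate.

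The paper's proof closes exactly this gap by a genuinely different device: instead of tracking each term of the change-of-variables formula separately, it proves the dimension-free weighted $L^2$ bound
$$
\sup_n\int\frac{(\mathcal{L}\varphi_n)^2}{1+|\nabla\varphi_n|^2}\,g\,d\gamma\le M<\infty
$$
(estimate~(\ref{ek4.3})), obtained by an integration-by-parts computation with the weight $u(|\nabla\varphi_n|^2)$, $u(t)=1/(1+t)$, that exploits cancellations across the $\mathcal{L}$ and $D^2$ terms. Combined with the uniform integrability of $\{|\nabla\varphi_n|^2\}$ (which follows from the strong $L^2(g\cdot\gamma,H)$ convergence of $\nabla\varphi_n$), this gives uniform integrability of $\{\mathcal{L}\varphi_n\}$ directly, and then a.e.\ convergence plus uniform integrability yields $L^1(g\cdot\gamma)$ convergence with no need to pin down $\int Y\,d\gamma$ beforehand. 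If you wish to repair your proof, replacing the ``infinite-dimensional Proposition~\ref{p2.4}'' step by an estimate of the type~(\ref{ek4.3}) is the missing idea.
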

\begin{proof}
Let us identify $F$ and $\mathcal{L} \varphi$.
One way of doing this would be proving that the integrals
of $(\mathcal{L} \varphi_n )^2$ with respect to $g\cdot \gamma$ are uniformly bounded
 and then use the uniform integrability.
However, it seems that the sequence $\{\mathcal{L} \varphi_n\}$
may be unbounded in $L^2(g\cdot\gamma)$ under the solely assumption of the
finiteness of ${\rm I}_{\gamma} g$.

To bypass this difficulty we prove another estimate:
\begin{equation}\label{ek4.3}
\sup_n  \int \frac{(\mathcal{L} \varphi_n)^2 }{1 + |\nabla \varphi_n|^2}
\ g  d\gamma \le M< \infty ,
\end{equation}
where
$$
M=4 {\rm I}_\gamma g
+2 \sup_n \int |\nabla \varphi_n|^2 \ g  d\gamma+
10\sup_n \int \|D^2\varphi_n\|_{{\mathcal HS}}^2 \ g d\gamma
\le 16  {\rm I}_\gamma g.
$$
Let  $u$ be a decreasing function on $[0,+\infty)$.
We have
\begin{align*}
& \int (\mathcal{L} \varphi_n)^2  u(|\nabla \varphi_n|^2) \ g  d\gamma =
- \sum_{e_i} \int \partial_{x_i} \varphi_n  \cdot \partial_{x_i} ( \mathcal{L} \varphi_n )
 u(|\nabla \varphi_n|^2 ) \ g  d\gamma \, -
\\& - \int \langle \nabla \varphi_n, \nabla g \rangle u(|\nabla \varphi_n|^2 )   \mathcal{L} \varphi_n \ d\gamma
- 2 \int \langle \nabla \varphi_n,
D^2 \varphi_n \nabla \varphi_n \rangle u'(|\nabla \varphi_n|^2)
  \mathcal{L} \varphi_n \ g  d\gamma.
\end{align*}
Using the relations
\begin{align*}
  - \int & \partial_{x_i} \varphi_n   \cdot \partial_{x_i}
   ( \mathcal{L} \varphi_n ) u(|\nabla \varphi_n|^2 )
 \  g  d\gamma =
 \\
&= \int (\partial_{x_i} \varphi_n)^2  u(|\nabla \varphi_n|^2 ) \  g  d\gamma
 - \int \partial_{x_i} \varphi_n  \cdot   \mathcal{L}
 ( \partial_{x_i} \varphi_n ) u(|\nabla \varphi_n|^2 )  \  g  d\gamma =
\\&
= \int (\partial_{x_i} \varphi_n)^2 u(|\nabla \varphi_n|^2 ) \  g  d\gamma
+
\int |\nabla \partial_{x_i} \varphi_n|^2  u(|\nabla \varphi_n|^2 )  \  g  d\gamma \, +
\\& + \int \partial_{x_i} \varphi_n  \cdot
\langle \nabla \partial_{x_i} \varphi_n, \nabla g \rangle
 u(|\nabla \varphi_n|^2 )  \ d\gamma \, +
\\&
+ 2 \int \partial_{x_i} \varphi_n \langle \nabla \partial_{x_i}
\varphi_n, D^2 \varphi_n \nabla \varphi_n \rangle
u'(|\nabla \varphi_n|^2 ) \  g  d\gamma
\end{align*}
and summing in  $i$ we obtain that
\begin{align*}
& \int (\mathcal{L} \varphi_n)^2  u(|\nabla \varphi_n|^2 )  \ g  d\gamma =
- \int \langle \nabla \varphi_n, \nabla g \rangle u(|\nabla \varphi_n|^2 )
\mathcal{L} \varphi_n \ d\gamma \, +
\\& - 2 \int \langle \nabla \varphi_n, D^2 \varphi_n \nabla \varphi_n \rangle
u'(|\nabla \varphi_n|^2 )    \mathcal{L} \varphi_n \ g  d\gamma \, +
\\&
+\int |\nabla \varphi_n|^2 u(|\nabla \varphi_n|^2 )  \  g  d\gamma
+
\int \|D^2 \varphi_n\|^2_{\mathcal{HS}}  u(|\nabla \varphi_n|^2 )  \  g  d\gamma \, +
\\&
+  \int \langle D^2 \varphi_n \cdot \nabla \varphi_n, \nabla g \rangle
u(|\nabla \varphi_n|^2 )  \ d\gamma
+
2 \int  \bigl| D^2 \varphi_n \nabla \varphi_n   \bigr|^2
u'(|\nabla \varphi_n|^2 )  \  g  d\gamma.
\end{align*}
For any $\varepsilon>0$ the Cauchy inequality yields
\begin{multline*}
- \int \langle \nabla \varphi_n, \nabla g \rangle u(|\nabla \varphi_n|^2 )
 \mathcal{L} \varphi_n \ d\gamma
\le
\\
\le
\frac{1}{4\varepsilon} \int \frac{|\nabla g|^2}{g} \ d\gamma
+ \varepsilon \int (\mathcal{L} \varphi_n)^2
|\nabla \varphi_n|^2 u^2(|\nabla \varphi_n|^2 ) \ g  d\gamma,
\end{multline*}
\begin{multline*}
 - 2 \int \langle \nabla \varphi_n, D^2 \varphi_n \nabla \varphi_n \rangle
u'(|\nabla \varphi_n|^2 )    \mathcal{L} \varphi_n \ g  d\gamma \le
\\
\le
\varepsilon \int   (\mathcal{L} \varphi_n)^2   u(|\nabla \varphi_n|^2)  \ g  d\gamma
+ \frac{1}{\varepsilon} \int \frac{(u')^2}{u} (|\nabla \varphi_n|^2)
 |D^2 \varphi_n \cdot \nabla \varphi_n |^2 |\nabla \varphi_n|^2   \ g d\gamma,
\end{multline*}
\begin{multline*}
 \int \langle D^2 \varphi_n \cdot \nabla \varphi_n, \nabla g \rangle
 u(|\nabla \varphi_n|^2)  \ d\gamma
\le
\\
\le
\frac{1}{4\varepsilon} \int \frac{|\nabla g|^2}{g} \ d\gamma
+\varepsilon \int | D^2 \varphi_n \cdot \nabla \varphi_n|^2
u^2(|\nabla \varphi_n|^2)  \ g  d\gamma .
\end{multline*}
It follows that
$$
\sup_n  \int (\mathcal{L} \varphi_n)^2  u(|\nabla \varphi_n|^2 ) \ g  d\gamma
< \infty,
$$
provided that the functions
$$
|\nabla \varphi_n|^2 u(|\nabla \varphi_n|^2 ), \
\frac{(u')^2}{u} (|\nabla \varphi_n|^2 )  |\nabla \varphi_n|^4
$$
are bounded and
$$
 \varepsilon u^2(|\nabla \varphi_n|^2 )  +  2  u'(|\nabla \varphi_n|^2 )\le 0.
$$
For example, we can take $u(t) = \frac{1}{1+t}$. Then both functions are bounded by~$1$
and the latter estimate holds if $\varepsilon<1$, so for $\varepsilon=1/4$
we arrive at~(\ref{ek4.3}).

The estimate obtained enables us to verify the uniform integrability
of $\{\mathcal{L}\varphi_n\}$ with respect to $g\cdot\gamma$.
Indeed, since $\{\nabla\varphi_n\}$ converges in~$L^2(g\cdot\gamma,H)$,
we have convergence of the sequence
$\{|\nabla\varphi_n|^2\}$ in $L^1(g\cdot\gamma)$, hence its uniform
integrability with respect to $g\cdot\gamma$. Now, given $\varepsilon>0$,
we can find $\delta>0$ such that the integral of $|\nabla\varphi_n|^2I_E$ against
$g\cdot\gamma$ is less than $\varepsilon^2/(4M+1)$ for every set $E$ of
$g\cdot\gamma$-measure less
than~$\delta$. Therefore, the integral of $|\mathcal{L}\varphi_n|I_E$ against
$g\cdot\gamma$ does not exceed~$\varepsilon$, because either
$$
|\mathcal{L}\varphi_n|I_E\le
\frac{\varepsilon}{2M} \frac{|\mathcal{L}\varphi_n|^2}{1+|\nabla \varphi_n|^2}
$$
and the integral over the corresponding set is estimated by $\varepsilon/2$ or
in the case of the opposite inequality we have
$|\mathcal{L}\varphi_n|I_E\le 2M\varepsilon^{-1}(1+|\nabla \varphi_n|^2)I_E$
and the integral over the corresponding set also does not exceed $\varepsilon/2$.

Finally, for any smooth cylindrical function $\xi$ we have
$$
\int \mathcal{L} \varphi_n \ \xi \ g  d\gamma
= - \int \bigl\langle\nabla  \varphi_n,
\nabla \xi + \xi \frac{\nabla g}{g} \bigr \rangle \ g d\gamma,
 $$
which gives in the limit
$$
\int F \xi \ g  d\gamma =- \int \bigl\langle\nabla
\varphi, \nabla \xi + \xi \frac{\nabla g}{g} \bigr \rangle \ g d\gamma
$$
due to the established convergence, hence $F=\mathcal{L} \varphi$, i.e. (\ref{ek4.2}) holds.
\end{proof}

\begin{remark}
{\rm
We recall once again that it has not been shown that $\varphi \in L^2(g\cdot\gamma)$
(and we do not know whether this inclusion holds under our assumptions,
under which we have only $\varphi \in L^1(g\cdot\gamma)$),
consequently, the gradient $\nabla\varphi$ has been defined not in the Sobolev sense, but
pointwise almost everywhere (however, $D^2\varphi$ is defined in the Sobolev sense
and $\nabla\varphi$ coincides almost everywhere with the limit
of the mappings $\nabla (\varphi \wedge N \vee (-N)$), where
$\varphi \wedge N \vee (-N)$ are Sobolev class functions).
In order to define also $\nabla \varphi$ in the Sobolev sense, it would be enough
to have the inclusion $\varphi\in L^2(g\cdot \gamma)$. To guarantee this inclusion,
it suffices to impose the additional condition that $g\cdot \gamma$
satisfies the Poincar\'e inequality; see also \cite{BoKo} and the next remark.
}\end{remark}

\begin{remark}
{\rm
If in the above theorem we have $1/g\in L^r(\gamma)$ for some $r>1$,
then $\varphi\in W^{p,2}(\gamma)$ with $p=2r/(1+r)$
and $\mathcal{L}\varphi$ exists in the sense
of $W^{p,2}(\gamma)$, i.e. $\varphi$ belongs to the domain of generator
of the Ornstein--Uhlenbeck semigroup in $L^p(\gamma)$.
Indeed, writing
$\|D^2\varphi_n\|_{\mathcal{HS}}^p=\|D^2\varphi_n\|_{\mathcal{HS}}^pg^{-1}g$
and  applying H\"older's inequality with respect to $g\cdot\gamma$,
we obtain a uniform bound on the integrals of $\|D^2\varphi_n\|_{\mathcal{HS}}^p$
with respect to~$\gamma$ and similarly for
$|\nabla \varphi_n|^p$, which by the Poincar\'e inequality yields that
the sequence of functions $\varphi_n-c_n$, where $c_n$ is the integral of $\varphi_n$
against~$\gamma$, is bounded in $W^{p,2}(\gamma)$, whence the claim follows.
}\end{remark}

Finally,  Theorem \ref{t1.2} follows from the finite-dimensional Theorem  \ref{t3.2}.
The proof is standard and we omit it here. As usual, one takes the finite-dimensional approximations
$g_n ={\rm I\!E}^{n}_{\gamma} g = e^{-v_n}$. Let $\gamma_n$ be the projection of~$\gamma$.
Then $g_n>0$ a.e., $g_n\in W^{2,1}(\gamma_n)\cap W^{1,2}(\gamma_n)$,
and the norm of $|\nabla g_n/g_n|$ in $L^r(g_n\cdot\gamma_n)$ is estimated by the
norm of $|\nabla g/g|$ in $L^r(g\cdot\gamma)$, whenever the latter is finite.
It is easy to show that
$$
D^2 v_n \le {\rm I\!E}^{n}_{g\cdot\gamma} (D^2 v), \ (D^2 v_n)^{+} \le
{\rm I\!E}^{n}_{g\cdot\gamma} ( D^2 v)^{+}.
$$
Hence the finite-dimensional approximations satisfy the assumptions of Theorem~\ref{t3.2}
(here Remark~\ref{rem3.3} and (\ref{ek3.10}) are useful).
The result now follows by taking the limit as $n \to \infty$.

Note that analogous results can be obtained for another interesting class of transformations,
the so-called triangular transformations (see \cite{mera}, \cite{Bdifm},
\cite{BK04}, \cite{BK05a}, \cite{BKM}).
Some a~priori estimates for optimal transportations can be found in \cite{BoKo}.

This research  was carried out within ``The National Research University Higher School of Economics''
Academic Fund Program in 2012-2013, research grant No. 11-01-0175,
and supported by the RFBR projects 10-01-00518, 11-01-90421-Ukr-f-a,
11-01-12104-ofi-m, and the program SFB 701 at the University of Bielefeld.
Critical remarks of the referee have been very useful for us.

V.B.: Department of Mechanics and Mathematics,
Moscow State University, Moscow, Russia

A.K.: Department of Mathematics,
Higher School of Economics, Moscow, Russia


\begin{thebibliography}{10}

\bibitem{AGS}
Ambrosio~L., Gigli~N., Savar{\'e}~G.
 Gradient flows in metric spaces and in the Wasserstein spaces of probability measures.
  Lectures in Math., ETH Zurich, 2005.

\bibitem{Bo98}
Bogachev~V.I. Gaussian measures. Amer. Math. Soc., Rhode Island, Providence, 1998.

\bibitem{mera}
Bogachev~V.I. Measure theory. V. 1, 2. Springer, Berlin --  York, 2007.

\bibitem{Bdifm}
Bogachev~V.I. Differentiable measures and the Malliavin calculus.
Amer. Math. Soc., Rhode Island, Providence, 2010.

\bibitem{BK04}
Bogachev V.I., Kolesnikov A.V.
     Nonlinear transformations of convex measures and entropy of the Radon--Nikodym densities.
Dokl. Russian Acad. Sci. 397 (2004), n~2, 155--159 (in Russian); English transl.:
     Dokl. Math. 70 (2004), n~1, 524--528.

\bibitem{BK05a}
Bogachev V.I., Kolesnikov A.V. Nonlinear transformations
of convex measures. Teor. Verojatn Primen. 50 (2005), n~1, 27--51 (in Russian);
English transl.: Theory Probab. Appl. 50 (2006), n~1, 34--52.

\bibitem{BoKo}
Bogachev~V.I., Kolesnikov~A.V.
Integrability of absolutely continuous transformations of measures
and applications to optimal mass transport.
Teor. Verojatn Primen. 50 (2005), n~3, 433--456 (in Russian);
English transl.: Theory Probab.  Appl. 50 (2006), n~3, 367--385.

\bibitem{BoKo2005}
Bogachev~V.I, Kolesnikov~A.V.
On the  Monge--Amp\`ere equation in infinite dimensions.
Infin. Dimen. Anal. Quantum Probab. Relat. Topics 8 (2005), n~4, 547--572.

\bibitem{BoKo2006}
Bogachev V.I., Kolesnikov A.V.
On the  Monge--Amp\`ere equation on Wiener space,
Dokl. Russian Acad. Sci. 406 (2006), n~1, 7--11
(in Russian); English transl.: Dokl. Math. 73 (2006), n~1, 1--5.

\bibitem{BKM}
Bogachev V.I., Kolesnikov A.V., Medvedev K.V.
Triangular transformations of measures. Sbornik Math. 196 (2005), n~3, 3--30.

\bibitem{Cav}
Cavalletti F.
The Monge problem in Wiener space.
ArXiv: 11.03.2798v2 (to appear in Calc. Var. Partial Diff. Equ.).

\bibitem{FU00}
 Feyel~D., {\"U}st{\"u}nel~A.S.
The notions of convexity and concavity on Wiener space.  J. Funct. Anal. 176 (2000), 400--428.

\bibitem{FU1}
Feyel~D., {\"U}st{\"u}nel~A.S.
Monge--Kantorovitch measure transportation
and Monge--Amp{\`e}re equation on Wiener space. Probab. Theory Related Fields 128 (2004), 347--385.

\bibitem{Kol04}
Kolesnikov A.V.
 Convexity inequalities and optimal transport of infinite-dimensional measures.
 J. Math. Pures Appl. (9), 83 (2004), n~11, 1373--1404.

\bibitem{Kol2010}
Kolesnikov~A.V.
 On Sobolev regularity of mass transport and transportation inequalities.
ArXiv:1007.1103 (to appear in Theory Probab. Appl. 57 (2012)).

\bibitem{McCann97}
 McCann~R.J.
 A convexity principle for interacting gases. Adv. Math. 128 (1997), n~1, 153--179.

\bibitem{R}
Rockafellar R.T. Convex analysis. Princeton University Press,
Princeton, New Jersey, 1970.

\bibitem{Shig}
Shigekawa I. Stochastic analysis. Amer. Math. Soc., Providence,
Rhode Island, 2004.

\bibitem{UZ}
{\"U}st{\"u}nel~A.S., Zakai~M.
 Transformation of measure on Wiener space.  Springer, Berlin, 2000.

\bibitem{Vill}
Villani~C.
Topics in optimal transportation. Amer. Math. Soc. Providence, Rhode Island, 2003.

\end{thebibliography}
\end{document}